\newtheorem{theorem}{Theorem} [section]
\newtheorem{lemma}[theorem]{Lemma}
\newtheorem{proposition}[theorem]{Proposition}
\newtheorem{remark}[theorem]{Remark}
\DeclareMathOperator*{\supp}{supp}
\newcommand{\noi}{\noindent}
\newcommand{\Z}{\mathbb{Z}}
\newcommand{\R}{\mathbb{R}}
\newcommand{\T}{\mathbb{T}}
\let\Re=\undefined\DeclareMathOperator*{\Re}{Re}
\let\P= \undefined
\newcommand{\P}{\mathbf{P}}
\newcommand{\F}{\mathcal{F}}
\newcommand{\al}{\alpha}
\newcommand{\be}{\beta}
\newcommand{\dl}{\delta}
\newcommand{\eps}{\varepsilon}
\newcommand{\G}{\Gamma}
\newcommand{\ld}{\lambda}
\newcommand{\s}{\sigma}
\newcommand{\ft}{\widehat}
\newcommand{\wt}{\widetilde}
\newcommand{\cj}{\overline}
\newcommand{\dx}{\partial_x}
\newcommand{\dt}{\partial_t}
\newcommand{\dd}{\partial}
\renewcommand{\l}{\ell}
\newcommand{\les}{\lesssim}
\newcommand{\ges}{\gtrsim}
\newcommand{\jb}[1]
{\langle #1 \rangle}
\newcommand{\ind}{\mathbf 1}
\newcommand{\M}{\mathcal{M}}
\newcommand{\N}{\mathbb{N}}
\newcommand{\NN}{\mathcal{N}}
\newtheorem*{ackno}{Acknowledgments}
\numberwithin{equation}{section}
\numberwithin{theorem}{section}
\renewcommand{\u}{{\bf u}}
\begin{document}

\title[On the ill-posedness of the  cubic NLS on the circle]
{On the ill-posedness  of the   cubic nonlinear
Schr\"odinger equation on the circle}

\author
{Tadahiro Oh and Yuzhao Wang}

\address{
Tadahiro Oh, 
School of Mathematics\\
The University of Edinburgh\\
and The Maxwell Institute for the Mathematical Sciences\\
James Clerk Maxwell Building\\
The King's Buildings\\
Peter Guthrie Tait Road\\
Edinburgh\\ 
EH9 3FD\\
 United Kingdom}

\email{hiro.oh@ed.ac.uk}

\address{
Yuzhao Wang, 
School of Mathematics\\
The University of Edinburgh\\
and The Maxwell Institute for the Mathematical Sciences\\
James Clerk Maxwell Building\\
The King's Buildings\\
Peter Guthrie Tait Road\\
Edinburgh\\ 
EH9 3FD\\
 United Kingdom}

\email{yuzhao.wang@ed.ac.uk}

\subjclass[2010]{35Q55}

\keywords{nonlinear Schr\"odinger equation;  ill-posedness; norm inflation}

\begin{abstract}
In this note, we 
consider the ill-posedness issue for  the cubic nonlinear Schr\"odinger equation
(NLS)
on the circle.
In particular,  adapting the argument by 
Christ-Colliander-Tao \cite{CCT2b} to the periodic setting, 
we exhibit a norm inflation phenomenon
for both the usual cubic NLS
and the Wick ordered cubic NLS 
for $s \leq s_\text{crit} :=- \frac 12$.
We also discuss norm inflation phenomena for  general 
cubic fractional NLS on the circle.

\end{abstract}

\maketitle

\section{Introduction}

We consider 
the  cubic  nonlinear Schr\"odinger equation (NLS): 
\begin{align}
\begin{cases}
i \dt u -    \dx^2 u \pm  |u|^{2}u  = 0\\
u|_{t = 0} = u_0, 
\end{cases}
\ (x, t) \in \T\times \R \quad \text{or} \quad \R \times \R.
\label{NLS0}
\end{align}

\noi
The equation \eqref{NLS0} appears
in various physical settings: nonlinear optics, 
fluids, plasmas, 
and quantum field theory.  See \cite{SULEM} for a general review on the subject.
It is  also known to be one of the simplest completely integrable 
partial differential equations (PDEs)
\cite{AKNS, AM, GK}.

\subsection{Galilean invariance}
The Cauchy problem for the cubic NLS has been studied extensively
both on $\R$ and $\T$.
It is well known that \eqref{NLS0} is invariant under several symmetries.
The Galilean invariance
 states that if $u(x, t)$ is a solution to \eqref{NLS0}
on $\R$, then
$u^\beta(x, t) := e^{i\frac{\beta}{2}x}e^{i\frac{\beta^2}{4}t}  u (x+\beta t, t)$
is also a solution to \eqref{NLS0} on $\R$ with  modulated initial data. 
Note that the $L^2$-norm is preserved under this Galilean invariance.\footnote{In fact, 
any Fourier-Lebesgue $\F L^{p}$-norm defined in \eqref{FL} is invariant under the Galilean symmetry.}
Namely, 
$s_\text{crit}^\infty  :=  0$ is the critical Sobolev regularity
with respect to the Galilean invariance.
In fact, while \eqref{NLS0} is globally well-posed
in $L^2(\R)$ \cite{Tsutsumi},  
it is known to be `mildly ill-posed' below $L^2(\R)$.
More precisely, the solution map $\Phi(t): u_0 \in H^s(\R) \mapsto u(t) \in H^s(\R)$
fails to be locally uniformly continuous if $s < 0$ \cite{KPV, CCT1}.
We point out that this mild ill-posedness result does 
not assert that \eqref{NLS0} on $\R$ is ill-posed below $L^2(\R)$.
Indeed, the well-posedness issue of \eqref{NLS0} on $\R$ below $L^2(\R)$
is a long-standing open problem.

The Galilean invariance also holds in the periodic setting for $\be \in 2 \Z$.
In this case, there is even a stronger dichotomy in the behavior of solutions for  $s\geq 0$ and $s < 0$.
On the one hand, Bourgain \cite{BO1} 
proved global well-posedness of \eqref{NLS0} in $L^2(\T)$.
On the other hand, \eqref{NLS0} on $\T$  
is known to be ill-posed below $L^2(\T)$.
As in the real line case, 
the solution map $\Phi(t): u_0\in H^s (\T)\mapsto u(t) \in H^s(\T)$
fails to be locally uniformly continuous if $s < 0$ \cite{BGT, CCT1}.
Moreover, Christ-Colliander-Tao \cite{CCT2} and Molinet \cite{MOLI}
proved that the solution map is in fact discontinuous if $s < 0$.
Hence, \eqref{NLS0} on $\T$ is ill-posed in negative Sobolev spaces.
Finally,  Guo-Oh \cite{GO} proved non-existence of solutions for \eqref{NLS0} on $\T$ below $L^2(\T)$.
See \cite{GO} for a precise statement.

Therefore, in the periodic setting,  one needs
to consider a renormalized cubic NLS outside $L^2(\T)$.
Given a global solution $u \in C(\R;  L^2(\T))$ to \eqref{NLS0}, 
we define the following invertible gauge transformation:
\begin{equation}
u(t) \mapsto
\mathcal{G}(u)(t) : = e^{\mp 2 i t \fint |u|^2 dx} u(t).
\label{gauge}
\end{equation}

\noi
In view of the $L^2$-conservation, 
a direct computation shows that the gauged function,  which we still denote by $u$, 
solves the following
 {\it Wick ordered cubic NLS}:
\begin{equation}
	\label{NLS1} 
	\begin{cases}
		i \dt u - \dx^2 u \pm ( |u|^2 -2  \fint |u|^2 dx) u = 0 \\
		u|_{t= 0} = u_0,
	\end{cases}
	(x, t)  \in \T \times \R.
\end{equation}

\noi
Conversely, given a global solution $u \in C(\R;  L^2(\T))$
to \eqref{NLS1}, 
we see that $\mathcal{G}^{-1}(u)$ solves the original cubic NLS \eqref{NLS0}.
Such a gauge transformation, however, 
does not make sense below $L^2(\T)$
and thus
we cannot freely convert solutions of \eqref{NLS1} into solutions of \eqref{NLS0}.
In other words, 
 the standard  cubic NLS \eqref{NLS0}
and the Wick ordered  cubic NLS \eqref{NLS1} 
 represent the same dynamics  under different gauge choices
(only) in $L^2(\T)$.

The renormalization of the nonlinearity in \eqref{NLS1}
is canonical, corresponding to the Wick ordering 
in Euclidean quantum field theory.
See  \cite{BO96, OS, OThomann}.
The specific choice of  a gauge for \eqref{NLS1}
 removes a certain singular component from the cubic nonlinearity.
 As a result, 
the Wick ordered cubic NLS \eqref{NLS1}  
is known to behave better than  the cubic NLS \eqref{NLS0} outside $L^2(\T)$.
In fact, the standard cubic NLS on $\R$
and the Wick ordered cubic NLS \eqref{NLS1} on $\T$ share many common features:

\begin{itemize}
\item global well-posedness in $L^2$ \cite{Tsutsumi, BO1}. 

\vspace{1mm}

\item failure of uniform continuity of the solution map on bounded sets  below $L^2$
\cite{KPV, BGT, CCT1,  CO}.

\vspace{1mm}

\item weak continuity in $L^2$ \cite{GM, OS}. 

\vspace{1mm}

\item local well-posedness in the Fourier-Lebesgue spaces $\F L^{s, p}$ \cite{Grunrock, CH2, GH}
for $ s\geq 0$ and $1 < p < \infty$.
Here, the Fourier-Lebesgue space $\F L^{s, p}(\R)$ is defined by the norm:
\begin{align}
\|f \|_{\F L^{s, p}(\R)} = \| \jb{\xi}^s \ft f (\xi)\|_{L^{p}(\R)}
\label{FL}
\end{align}

\noi
with an analogous definition for $\F L^{s, p}(\T)$ in the periodic setting.
When $s = 0$, we set $\F L^{p} := \F L^{0, p}$ for simplicity.

\vspace{1mm}

\item a priori bound and existence (without uniqueness) of solutions in negative Sobolev spaces
\cite{KT1, CCT3, KT2, GO}.

\end{itemize}

\noi
See \cite{OS} for more discussion on this issue.

\subsection{Scaling  symmetry}

Another important symmetry is the dilation symmetry; 
 if $u(x, t)$ is a solution to \eqref{NLS0}
on $\R$, then
$u^\ld(x, t) := \ld^{-1} u (\ld^{-1}x, \ld^{-2}t)$
is also a solution to \eqref{NLS0} on $\R$ with scaled initial data.
This dilation symmetry
induces another critical Sobolev regularity  given by $s_\text{crit} := -\frac 12$.
Namely, the dilation symmetry leaves the 
 homogeneous $\dot{H}^{s_\text{crit}}$-norm invariant.
 It is commonly conjectured that a PDE is ill-posed in $H^s$ for $s < s_\text{crit}$.
Indeed, Christ-Colliander-Tao \cite{CCT2b} exhibited
a {\it norm inflation} phenomenon for \eqref{NLS0} in  $H^s(\R)$
for $s <  s_\text{crit} = -\frac{1}{2}$.\footnote{
Although Theorem 2 in \cite{CCT2b} claims norm inflation for $s \leq - \frac {d}{2}$, 
the proof requires $s < s_\text{crit}$ and thus, as it is written,  their result does not hold for the cubic NLS \eqref{NLS0} on $\R$
at the scaling critical regularity $s = - \frac 12$.
  See (4.4) in \cite{CCT2b}.
One can, however, modify the argument and show that  norm inflation holds even 
at the scaling critical regularity.  See Subsection \ref{SUBSEC:crit} below.
See also \cite{Kishimoto, Oh, CP}.
}
Note that this is a stronger statement than the failure of continuity of the solution map (at 0).
On the one hand, the failure of continuity at 0
states that 
given any $\eps > 0$, 
there exist a solution $u = u(\eps)$ to \eqref{NLS0}
and a time $t  \in (0, \eps) $ such that 
\[ \| u(0)\|_{H^s(\R)} < \eps \qquad \text{ and } \qquad \| u(t)\|_{H^s(\R)} \ges 1.\]

\noi
On the other hand,  norm inflation 
states that 
given any $\eps > 0$, 
there exist a solution $u = u(\eps) $ to \eqref{NLS0} on $\R$
and a time $t  \in (0, \eps) $ such that 
\[ \| u(0)\|_{H^s(\R)} < \eps \qquad \text{ and } \qquad \| u(t)\|_{H^s(\R)} > \eps^{-1}.\]

While there is no dilation symmetry in the periodic setting, 
the heuristics provided by the scaling argument still plays an important role.
In the following, we prove that the same norm inflation phenomenon
also holds for both the standard cubic NLS \eqref{NLS0} and the Wick ordered NLS \eqref{NLS1} 
in the periodic setting if $s \leq -\frac 12$.\footnote{In a recent paper \cite{Oh}, 
the first author showed that the norm inflation for \eqref{NLS0} on $\M = \R$ or $\T$
also holds for general initial data in the following sense;
let  $s \leq  - \frac 12$. 
Then, 
given any $u_0 \in H^s(\M)$ and  any $\eps > 0$, 
there exist a  solution $u = u(\eps, u_0)$ to \eqref{NLS0} on $\M$
and a time $t  \in (0, \eps) $ such that 
\begin{align*}
 \| u(0) - u_0 \|_{H^s(\M)} < \eps \qquad \text{ and } 
\qquad \| u_\eps(t)\|_{H^s(\M)} > \eps^{-1}.
\end{align*}

\noi
This in particular implies that 
the solution map 
to the cubic NLS \eqref{NLS0} is discontinuous
everywhere in $H^s(\M)$.
A similar norm inflation with general initial data
holds for the Wick ordered NLS \eqref{NLS1} on $\T$. 
}

\begin{theorem}\label{THM:1}
Let $s \leq - \frac 12$.
Then, 
given any $\eps > 0$, 
there exist a solution $u = u(\eps)$ to \eqref{NLS0} on $\T$
and a time $t  \in (0, \eps) $ such that 
\[ \| u(0)\|_{ H^s(\T)} < \eps \qquad \text{ and } \qquad \| u(t)\|_{ H^s(\T)} > \eps^{-1}.\] 

\end{theorem}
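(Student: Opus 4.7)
My plan is to transplant the Picard-iteration scheme of Christ--Colliander--Tao \cite{CCT2b} to the torus. For each $\eps > 0$, I would select parameters $A > 0$, $N \in \N$ large, and $R \in \N$ depending on $\eps$, and set
\begin{equation*}
u_0(x) = A \sum_{n \in \Sigma} e^{inx},
\end{equation*}
where $\Sigma \subset \Z$ is a finite set of $R$ frequencies of magnitude $\sim N$. Since $\|u_0\|_{H^s} \sim A R^{1/2} N^s$, imposing $\|u_0\|_{H^s} < \eps$ gives the first constraint on $(A, R, N)$.

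Writing the solution as the Picard series $u = \sum_{k \geq 0} u^{(k)}$ with $u^{(0)}(t) = S(t) u_0$ the linear Schr\"odinger evolution and each $u^{(k+1)}$ defined from cubic interactions of earlier iterates via the Duhamel formula, the main step is a lower bound on the first iterate
\begin{equation*}
u^{(1)}(t) \;=\; \mp i \int_0^t S(t-t') \, \bigl|S(t') u_0\bigr|^2 S(t') u_0 \, dt'.
\end{equation*}
Fourier-expanding, $\widehat{u^{(1)}(t)}(n)$ decomposes as a sum over triples $(n_1, n_2, n_3) \in \Sigma^3$ with $n_1 - n_2 + n_3 = n$, modulated by the Duhamel phase $\int_0^t e^{it'(n_1^2 - n_2^2 + n_3^2 - n^2)}\,dt'$. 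I would isolate a target frequency $n^\ast$ --- either a resonant mode $n^\ast \in \Sigma$ where the phase vanishes (giving linear-in-$t$ growth from the $\sim R$ diagonal triples) or an off-$\Sigma$ mode of large multiplicity in the cubic sumset $\Sigma - \Sigma + \Sigma$ with favorable $H^s$-weight $\jb{n^\ast}^s$ --- and, by restricting to short $t$ so the Duhamel phase stays bounded away from cancellation, produce a bound of the form $\|u^{(1)}(t)\|_{H^s} \gtrsim t A^3 R^\sigma \jb{n^\ast}^s$ for a suitable combinatorial exponent $\sigma$.

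Norm inflation then reduces to simultaneously solving (i) $A R^{1/2} N^s < \eps$, (ii) $t A^3 R^\sigma \jb{n^\ast}^s > \eps^{-1}$, (iii) $t < \eps$, and (iv) a smallness condition on $\sum_{k \geq 2} u^{(k)}$ in a Wiener-type space like $\F L^1$ ensuring that $u^{(1)}$ dominates the series; these four constraints are jointly solvable precisely when $s \leq -\frac{1}{2}$, with the endpoint $s = -\frac{1}{2}$ being tight. The Wick-ordered equation \eqref{NLS1} is handled identically once one arranges the target $n^\ast$ off the diagonal, so that subtraction of the renormalization $2(\fint |u|^2\,dx)u$ does not touch the dominant contribution. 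The main obstacle, replacing the continuous scaling available on $\R$ in \cite{CCT2b}, is the arithmetic task of choosing $\Sigma \subset \Z$ so that the cubic sumset concentrates at $n^\ast$ with the required multiplicity while the Duhamel phases survive the time integration; at the critical endpoint $s = -\tfrac{1}{2}$ this is delicate and is expected to require a logarithmic refinement, analogous to the one flagged in the footnote regarding the argument of \cite{CCT2b} on $\R$.
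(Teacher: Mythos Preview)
Your proposal is a valid alternative route, and the paper itself acknowledges it: the introduction notes that Theorem~\ref{THM:1} can also be obtained via the Iwabuchi--Ogawa/Kishimoto method \cite{IO,Kishimoto}, which is exactly the Picard-iteration scheme you describe (lower bound on the first Duhamel iterate at a well-chosen low frequency, with higher iterates controlled in $\F L^1$). The paper's own argument is genuinely different, at least in the supercritical regime $s<-\tfrac12$: rather than analyzing the Picard series on $\T$ directly, it takes a compactly supported $\phi$ on $\R$, uses the dispersionless ODE $i\partial_t w + |w|^2 w = 0$ with explicit solution $w = \phi e^{i|\phi|^2 t}$ as a model for high-to-low transfer, approximates it by the small-dispersion NLS $i\partial_t v - \dl^2 \partial_x^2 v + |v|^2 v = 0$ on a dilated torus $\T_{L}$ via an energy estimate (Lemma~\ref{LEM:approx}), and then rescales back to $\T$; the periodization by Dirac's comb is the device that replaces continuous scaling. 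At the endpoint $s=-\tfrac12$ the paper's method is closer to yours --- it expands the ODE solution as $\sum_k \Xi_k$ and shows the $k=1$ term dominates, then proves (Lemma~\ref{LEM:YW}) that the interaction representation of the true NLS solution agrees with this to sufficient accuracy --- but the framing is still via ODE approximation rather than Picard iterates. Your approach is arguably cleaner in that it treats all $s\le -\tfrac12$ by one mechanism and avoids periodization and the small-dispersion detour; the paper's approach, on the other hand, makes the role of the dispersionless dynamics and the scaling heuristic transparent, and its supercritical argument generalizes more readily to fractional dispersion (Theorem~\ref{THM:2}). One point to tighten in your outline: the concrete choice of $\Sigma$ matters, and the paper's choice --- two arithmetic progressions near $N$ and $2N$ of length $A$ --- is what makes the cubic sumset concentrate near the origin with multiplicity $\sim A^2$, which is the mechanism you allude to but do not specify.
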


While we state and prove Theorem \ref{THM:1} 
for the standard cubic NLS \eqref{NLS0} on $\T$, 
it is easy to see that the same result holds for 
the Wick ordered NLS \eqref{NLS1} on $\T$
since the gauge transformation \eqref{gauge}
preserves any Sobolev norm (thanks to the $L^2$-conservation).

We prove Theorem \ref{THM:1} by adapting
the argument in \cite{CCT2b} to the periodic setting.
In particular, we exploit 
the high-to-low energy transfer  in the dynamics of 
the following
(dispersionless) ODE: 
\begin{align}
i \dt w \pm |w|^2 w  = 0
\label{NLS1a}
\end{align}

\noi
and  the small dispersion NLS:
\begin{align*}
i \dt v -   \dl^2  \dx^2 v \pm  |v|^{2}v  = 0, 
\end{align*}

\noi
approximating \eqref{NLS1a}.
In \cite{CCT2b}, scaling  plays an important role.
In the periodic setting, however, 
scaling alters spatial domains
and thus we must work with care.
We also use the periodization by Dirac's comb
to transfer some information from $\R$
to dilated tori of large periods.
We point out that in the critical case: $ s= -\frac 12$, 
we employ solutions to \eqref{NLS1a}
exhibiting  stronger high-to-low energy transfer than 
those in \cite{CCT2b}.
Moreover, we do not use a scaling argument in this  case.
Instead, we directly approximate NLS \eqref{NLS0} by the ODE \eqref{NLS1a}.
See Subsection \ref{SUBSEC:crit}
and Appendix \ref{SEC:APP2}.
While this argument can be  adapted to 
the higher dimensional  setting (on both rational and irrational tori), 
we do not pursue this issue here.

In \cite{IO}, Iwabuchi-Ogawa introduced 
a technique for proving ill-posedness of 
evolution equations, exploiting high-to-low energy transfer
in the first Picard iterate.
This method is built upon the previous work by Bejenaru-Tao \cite{BT}
and is developed further to cope with a  wider class of equations, 
utilizing (scaled) modulation spaces.
In fact, Theorem \ref{THM:1} can also be proven
by this method.
See Kishimoto \cite{Kishimoto} for details.
See also a recent result by Carles-Kappeler \cite{CK}, 
where they employed  a geometric optics approach 
and proved a norm inflation for \eqref{NLS0} and \eqref{NLS1} 
in the Fourier-Lebesgue space $\F L^{s, p}(\T)$
for  $ s< -\frac 23$ and $p \in [1, \infty]$.
While our method in this paper can be  adapted
to the Fourier-Lebesgue spaces, 
we do not pursue this issue here.

Lastly, we point out 
the work by Burq-G\'erard-Tzvetkov \cite[Appendix]{BGT2}
on ill-posedness in $H^1(\mathcal M)$
of super-quintic NLS on a
three-dimensional Riemannian manifold $\mathcal{M}$.
While this work is strongly motivated by \cite{CCT2b}
utilizing the dispersionless NLS,  
they
exploit the local-in-space nature of the dynamics
responsible for norm inflation
in the absence of dilation symmetry.
This would provide an alternative approach to Theorem \ref{THM:1}.

%

\subsection{Norm inflation for the cubic fractional NLS}

In the following, we briefly discuss the situation for
the  cubic  fractional NLS on the circle:
\begin{align}
\begin{cases}
i \dt u +  (-  \dx^2)^\al u \pm  |u|^{2}u  = 0 \\
u|_{t = 0} = u_0, 
\end{cases}
\ (x, t) \in \T\times \R, 
\label{HNLS1}
\end{align}

\noi
for $\al > 0$.
When $\al = \frac 12$, \eqref{HNLS1} corresponds
to the cubic half-wave equation, while 
it corresponds to the cubic fourth order NLS when $\al = 2$.
See, for example, \cite{GG, OT, OW} for the 
work on \eqref{HNLS1}
in these cases.

The dilation symmetry for \eqref{HNLS1} on $\R$
states that  if $u(x, t)$ is a solution to \eqref{HNLS1}
on $\R$, then
$u^\ld(x, t) := \ld^{-\al} u (\ld^{-1}x, \ld^{-2\al}t)$
is also a solution to \eqref{HNLS1} on $\R$ with scaled initial data.
This gives rise to the scaling critical Sobolev regularity
$s_\text{crit}^\al: = \frac{1}{2} - \al$.

We prove the following norm inflation phenomenon for \eqref{HNLS1}.

\begin{theorem}\label{THM:2}
Given $\al > 0$, 
let \textup{(i)}
$s \leq  s_\textup{crit}^\al$ if
$s_\textup{crit}^\al \leq -\frac 12 $
or 
\textup{(ii)}
$s <   s_\textup{crit}^\al$ 
and $s\ne 0$ 
if $s_\textup{crit}^\al >-\frac 12  $.
Then, 
given any $\eps > 0$, 
there exist a solution $u = u(\eps)$ to \eqref{HNLS1} on $\T$
and a time $t  \in (0, \eps) $ such that 
\[ \| u(0)\|_{H^s(\T)} < \eps \qquad \text{ and } \qquad \| u(t)\|_{ H^s(\T)} > \eps^{-1}.\] 

\end{theorem}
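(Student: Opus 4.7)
The plan is to follow exactly the same strategy used in the proof of Theorem \ref{THM:1}, adapting the Christ--Colliander--Tao argument to the fractional dispersion $(-\dx^2)^\al$ on the circle. The dispersionless ODE \eqref{NLS1a}, which does not see $\al$, still plays the central role: its solutions $w(x,t)=w_0(x) e^{\mp i t |w_0(x)|^2}$ exhibit exactly the same pointwise high-to-low energy transfer, and one compares \eqref{HNLS1} to \eqref{NLS1a} on short time scales. Only the scaling exponents and the quantitative ODE-error estimates change with $\al$, which is precisely why the scaling critical regularity shifts from $-\tfrac 12$ to $s_\textup{crit}^\al=\tfrac 12-\al$.

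First I would set up the scaled problem. For $\ld > 0$ large, the dilation $u^\ld(x,t) = \ld^{-\al}u(\ld^{-1}x,\ld^{-2\al}t)$ sends solutions of \eqref{HNLS1} on $\T$ to solutions of the same equation on the large torus $\ld\T$ and leaves the $\dot H^{s_\textup{crit}^\al}$-norm invariant. I would then pick an $\R$-profile exactly as in Theorem \ref{THM:1}, periodize it by Dirac's comb onto $\ld\T$, and solve \eqref{HNLS1} there. After unscaling back to $\T$, the initial data has $H^s$-norm controlled by a power $\ld^{s-s_\textup{crit}^\al}$ (times an $\eps$-independent constant depending on the fixed profile), which is $<\eps$ for $\ld$ sufficiently large whenever $s < s_\textup{crit}^\al$.

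Next I would justify that the solution on $\ld\T$ is well approximated by the dispersionless ODE \eqref{NLS1a} on the relevant time window. For data concentrated at unit spatial scale on $\ld\T$, the operator $(-\dx^2)^\al$ acts as a bounded perturbation, and on a suitably short time interval the dispersive term in the Duhamel formula is negligible in $H^s$ compared with the nonlinearity. The bootstrap and energy estimates are a verbatim copy of those in the proof of Theorem \ref{THM:1}, with powers of $\ld^{2}$ (coming from $\dx^2$) systematically replaced by $\ld^{2\al}$. The high-to-low transfer produced by the ODE solution $w$ then yields $\|u(t)\|_{H^s(\T)} > \eps^{-1}$ at a suitable time $t \in (0,\eps)$.

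The principal obstacle is the case split. In case (i), $\al \ge 1$ and hence $s_\textup{crit}^\al \le -\tfrac 12$, so one can push all the way to $s = s_\textup{crit}^\al$; the endpoint is scaling critical and requires the sharper construction from Subsection \ref{SUBSEC:crit} and Appendix \ref{SEC:APP2} (ODE solutions with enhanced high-to-low transfer, and a direct ODE-approximation to \eqref{HNLS1} that bypasses the scaling argument entirely). In case (ii), $0 < \al < 1$ and $s_\textup{crit}^\al > -\tfrac 12$; here one must stay strictly below $s_\textup{crit}^\al$, and one must also exclude $s = 0$ because \eqref{HNLS1} is globally well-posed in $L^2(\T)$, which rules out norm inflation at that single regularity. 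In this genuinely subcritical regime no endpoint construction is needed, and the subcritical part of the argument used for Theorem \ref{THM:1} carries over directly.
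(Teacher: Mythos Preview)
Your treatment of case (i) is essentially correct, but case (ii) contains a genuine gap: the claim that ``the subcritical part of the argument used for Theorem \ref{THM:1} carries over directly'' is false whenever $s>-\tfrac12$.

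The supercritical argument of Subsection \ref{SUBSEC:super} rests on the simple high-to-low transfer \eqref{X2}, which deposits $O(1)$ Fourier mass near the origin. After the approximation and scaling, the resulting lower bound (see \eqref{X8} or, in the fractional case, \eqref{Z6}) is
\[
\|v_j(1)\|_{\dot H^s(\T_{L_j})}\ \gtrsim\ \bigg(\int_{c_0\dl_j^{2\al}\le|\xi|\le C_0}|\xi|^{2s}\,d\xi\bigg)^{\frac12}.
\]
This integral diverges as $\dl_j\to 0$ only if $s\le -\tfrac12$. For $-\tfrac12<s<0$ it is merely $O(1)$, so the same prefactor $\ld_j^{-s+\frac12-\al}\dl_j^{s-\frac12}$ that you used to make the initial $H^s$-norm small now also forces the solution norm to be small: no inflation. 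The paper therefore treats $-\tfrac12<s<0$ with a different, $N$-dependent profile $\psi_N$ (see \eqref{psi1} and Lemma \ref{LEM:growth2}) whose ODE evolution already inflates in $H^s$ \emph{before} any scaling is applied; the scaling step then only needs to preserve the ratio, which is arranged via \eqref{psi10}.

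You also omit the range $0<s<s_\textup{crit}^\al$, which is nonempty when $\al<\tfrac12$. Here high-to-low energy transfer is the wrong mechanism altogether: the ODE solution $w(x,t)=\phi(x)e^{i|\phi(x)|^2 t}$ satisfies $\|w(t)\|_{\dot H^\s}\sim(1+t)^\s$ for $\s>0$, and the paper exploits this direct growth (Case (iii) of Section 4.1), evaluating at a logarithmically large time $\wt t_j\sim|\log\dl_j|^{c_0}$ so that the growth beats the small scaling prefactor in \eqref{D3}.
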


Note that, even if $s_\text{crit}^\al \geq 0$, 
 there is no norm inflation for $s = 0$ due to the $L^2$-conservation.
When $s \leq -\frac 12$, 
Theorem \ref{THM:1} follows from 
 a small modification of the proof of Theorem \ref{THM:1}. 
When $s > -\frac 12$, we need to use different energy transfer mechanisms, 
depending on $- \frac 1 2  < s < 0$ or $s > 0$.
While we state and prove Theorem \ref{THM:2} in the periodic setting, 
the proof of Theorem \ref{THM:2} can be easily adapted
to the non-periodic setting (and it is in fact  easier).

In a recent paper \cite{CP}, Choffrut-Pocovnicu 
applied  the argument in   \cite{IO, Kishimoto}
and  
proved  norm inflation for the cubic 
half-wave equation on $\T$ and $\R$ (\eqref{HNLS1} with $\al = \frac 12 $)
for $s < s_\text{crit}^\al = 0$, exploiting abundance of resonances in the half-wave equation.
They also obtained a partial result 
for general $\al > 0$.

\begin{remark}\label{REM:Yuzhao}\rm
When  $ 0 < \al <1$, 
 we have  $s_\text{crit}^\al > -\frac 12$.
In this case, the question of norm inflation
at the critical regularity $ s= s_\text{crit}^\al > -\frac 12$ with $s \ne 0$
remains open.
See Remarks \ref{REM:fail1} and \ref{REM:fail2}
for a brief discussion for $-\frac 12 < s_\text{crit}^\al <0 $.
\end{remark}

The defocusing/focusing nature of 
the equation does not play any role.
Hence, we assume that it is defocusing
(with the $+$\,sign in \eqref{NLS0} and \eqref{HNLS1}) in the following.
Moreover, in view of the time reversibility of the equations, 
we only consider positive times.

\section{Sobolev spaces on a scaled torus and  Dirac's comb}
\label{SUBSEC:Sobolev}

In the proof of Theorem \ref{THM:1}, 
scaling of periodic domains plays an important role.
Hence, 
we briefly go over the basic definitions and properties 
of Sobolev spaces  
on a scaled torus $\T_L := \R /(L \Z)$,  
$L \geq 1$.\footnote{In the following, we often
identify
$\T_L$
with the interval $[-\frac L2, \frac L2] \subset \R$.} 
Given a function $f$ on $\T_L$, we define 
its Fourier coefficient  by 
\begin{equation}
 \ft{f}\big(\tfrac nL\big) 
 = \frac{1}{L} \int_{\T_L} f(x) e^{-2\pi i \frac{n}{L}x} dx, \qquad n \in \Z.
\label{F1}
 \end{equation}

\noi
Then, we have the following Fourier inversion formula:
\begin{equation} f (x) =  
\sum_{ n \in \Z } \ft{f}\big(\tfrac{n}{L}\big) e^{2\pi i \frac{n}{L} x}
\label{F2}
\end{equation}

\noi
and Plancherel's identity:
\[ \| f \|_{L^2(\T_L)} = L^\frac{1}{2}\big\| \ft f \big\|_{\l^2(\Z/L)}
= L^{\frac{1}{2}} \bigg( \sum_{n \in \Z } 
\big|\ft f\big(\tfrac{n}{L}\big)\big|^2\bigg)^\frac{1}{2}.
\]

\noi
Given $s \in \R$, 
 we define the homogeneous Sobolev space $\dot H^s(\T_L)$
and 
 the inhomogeneous Sobolev space $ H^s(\T_L)$
by the norms:
\begin{align}
\| f \|_{\dot H^s(\T_{L})} 
& := L^\frac{1}{2} \bigg( \sum_{n \in \Z \setminus \{0\} } 
\big|\tfrac{n}{L}\big|^{2s} \big|\ft f\big(\tfrac{n}{L}\big)\big|^2\bigg)^\frac{1}{2}, 
\label{F3}
\\
\| f \|_{ H^s(\T_{L})}
& := L^\frac{1}{2} \bigg( \sum_{n \in \Z } 
\big\langle\tfrac{n}{L}\big\rangle^{2s} \big|\ft f\big(\tfrac{n}{L}\big)\big|^2\bigg)^\frac{1}{2},
\label{F3a}
\end{align}

\noi
where $\jb{\, \cdot\, } = (1 + |\cdot|^2)^\frac{1}{2}$.

Recall the following definition of Dirac's comb:
\begin{align*}
\Xi_L(x) = \sum_{ n \in \Z} \dl_0(x - nL)
\end{align*}

\noi
for $L > 0$, where $\dl_0$ is the Dirac's delta function.
Given a (smooth) function $f \in L^1(\R)$, 
define the periodization $f_L$ of $f$ with period $L$
by setting
\begin{align}
f_L(x) = \Xi_L*f(x)
= \sum_{ n \in \Z} f(x - nL).
\label{F4}
\end{align}

\noi
Then, $f_L$ is a periodic function with period $L$.
Moreover, from \eqref{F1} and \eqref{F4}, we have
\begin{align}
\ft f_L\big(\tfrac nL\big)
= \tfrac{1}{L}
\ft f\big(\tfrac nL\big),
\qquad n \in \Z, 
\label{F5}
\end{align}

\noi
where $\ft f$ on the right-hand side denotes
the Fourier transform of $f$ on $\R$.
Then, it follows from  \eqref{F3} and \eqref{F5} with a Riemann sum approximation that 
\begin{align}
\| f_L \|_{\dot H^s(\T_{L})} 
& =  \bigg( \frac{1}{L}\sum_{n \in \Z \setminus\{0\}} 
\big|\tfrac{n}{L}\big|^{2s} \big|\ft f\big(\tfrac{n}{L}\big)\big|^2\bigg)^\frac{1}{2}\notag\\ 
& 
\longrightarrow
\bigg( \int_\R |\xi|^{2s} |\ft f(\xi)|^2 d\xi\bigg)^\frac{1}{2}
= \|f\|_{\dot H^s(\R)}
\label{F6}
\end{align}

\noi
as $L \to \infty$.
Similarly, we have
\begin{align}
\| f_L \|_{H^s(\T_{L})} 
\longrightarrow  \|f\|_{H^s(\R)}
\label{F7}
\end{align}

\noi
as $L \to \infty$.
Dirac's comb and related Poisson's summation formula
are typical tools to pass information from $\R$ to 
a periodic torus.
See \cite{Sobolev} for example.

\section{Proof of Theorem \ref{THM:1}}

We follow closely 
the norm inflation argument 
for \eqref{NLS0} on $\R$ by 
Christ-Colliander-Tao \cite{CCT2b}
and adapt it suitably to the periodic setting, at least when $s < -\frac 12$.
Recall that the argument in 
 \cite{CCT2b}
consists of the following three components:
\begin{itemize}
\item[(a)]
	
high-to-low energy transfer  for solutions to the following ODE:
\begin{align}
i \dt w (x) + |w|^2 w (x) = 0
\label{NLS2}
\end{align}

\noi
for $x \in \R$.
Recall that there is an explicit solution formula for \eqref{NLS2}:
\begin{align}
w(x, t) = w(x, 0) e^{i |w(x, 0)|^2  t}.
\label{NLS2a}
\end{align}

\item[(b)]
approximation property of the ODE \eqref{NLS2} by  the 
following cubic NLS with small dispersion: 
\begin{align}
i \dt v -   \dl^2  \dx^2 v +  |v|^{2}v  = 0.
\label{NLS3}
\end{align}

\noi
\item[(c)]
modified scaling argument 
to relate the small dispersion NLS \eqref{NLS3} and the cubic NLS \eqref{NLS0}.

\end{itemize}

\noi
A primary issue in adapting this argument to the periodic setting is the fact that the dilation symmetry alters
the spatial domain  in the periodic setting.
In the following, we use Dirac's comb and periodization
(of a function on $\R$)
with large periods to handle Step (b).
Then, we apply a modified scaling to 
reduce \eqref{NLS3}
 on a torus of a large period
to  \eqref{NLS0}
on the standard torus $\T$.

In the critical case $s = -\frac 12$, 
the scaling part of the argument no longer works.
We directly establish an approximation 
property of the ODE \eqref{NLS2}
by NLS \eqref{NLS0}.

\subsection{Approximation lemma}

In the following, we establish an approximation property
of  the ODE \eqref{NLS2} 
by the small dispersion NLS \eqref{NLS3}.
This approximation lemma will be useful in the supercritical case discussed
in Subsection \ref{SUBSEC:super}.
See \cite[Lemma 2.1]{CCT2b}
for an analogous statement in the non-periodic setting.

\begin{lemma} \label{LEM:approx}

Given $\phi \in C^\infty_c(\R)$, 
suppose that 
$L$ is sufficiently large such that 
$\supp \phi \subset \big[-\frac L2, \frac L2\big] \simeq\T_L$.
Let $w$ and $v = v(\dl)$ 
be the solutions to \eqref{NLS2} and \eqref{NLS3} on $\T_L$
with $w|_{t = 0} = v|_{t=0} = \phi$, respectively.
Then,  given any $\beta \in (0, 2)$, 
there exist $c>0$,  $C>0$,  $\dl_0 > 0$, and $L_0\geq1$ such that 
\begin{align*}
\| v (t)- w (t) \|_{ H^1 (\T_L)} \leq C \dl^\be
\end{align*}	

\noi
for all $|t| \leq c |\log \dl |^c$,  all $ 0 < \dl \leq \dl_0$, 
and all  periods $L \geq L_0$.

\end{lemma}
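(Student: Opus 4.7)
The plan is to mimic the energy-estimate argument of \cite[Lemma 2.1]{CCT2b}, with adjustments for the periodic setting and to keep all constants uniform in the period $L$. First, I would set $r = v - w$. Subtracting \eqref{NLS2} from \eqref{NLS3} gives the error equation
\begin{align*}
i\dt r - \dl^2 \dx^2 r + (|v|^2 v - |w|^2 w) = -\dl^2 \dx^2 w
\end{align*}
with $r(0) = 0$. The ODE is solved explicitly by $w(x,t) = \phi(x)\, e^{i|\phi(x)|^2 t}$, so $w$ remains supported in $\supp\phi \subset [-L/2, L/2]$, its $L^\infty$-norm equals $\|\phi\|_{L^\infty}$ independently of $L$, and direct differentiation gives the polynomial-in-$t$ growth
\begin{align*}
\|w(t)\|_{H^k(\T_L)} = \|w(t)\|_{H^k(\R)} \leq C_k \jb{t}^k,
\end{align*}
with constants depending only on $\phi$ and $k$. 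In particular $\dl^2\|\dx^2 w(t)\|_{L^2} \les \dl^2 \jb{t}^2$ and $\dl^2\|\dx^3 w(t)\|_{L^2} \les \dl^2 \jb{t}^3$, which will play the role of a small forcing on the interval of interest.

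Second, I would perform a standard $H^1$ energy estimate for $r$. The linear operator $i\dt - \dl^2\dx^2$ preserves each $H^s$-norm, so no derivative loss appears. The cubic difference admits the tame pointwise bound $|v|^2 v - |w|^2 w = O\big((|v|+|w|)^2 |r|\big)$ and an analogous one-derivative version obtained from the product rule. Together with the Sobolev embedding $H^1(\T_L) \embeds L^\infty(\T_L)$, whose constant is uniform for $L \geq 1$ in view of the Gagliardo-Nirenberg-type estimate
\begin{align*}
\|f\|_{L^\infty(\T_L)} \les \|f\|_{L^2(\T_L)}^{1/2}\|\dx f\|_{L^2(\T_L)}^{1/2} + L^{-1/2}\|f\|_{L^2(\T_L)},
\end{align*}
these ingredients yield a differential inequality of the form
\begin{align*}
\tfrac{d}{dt} \|r\|_{H^1}^2 \les \dl^2 \jb{t}^3 \|r\|_{H^1} + \big(\|v\|_{H^1}^2 + \|w\|_{H^1}^2\big)\|r\|_{H^1}^2.
\end{align*}
I would run this inside a bootstrap: on the interval where the a priori assumption $\|r(t)\|_{H^1} \leq 1$ holds, one has $\|v\|_{H^1} \leq \|w\|_{H^1} + 1 \les \jb{t}$, while global existence of $v$ on $\T_L$ is supplied by Bourgain's $L^2$ well-posedness theory.

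Third, Gronwall's inequality then produces
\begin{align*}
\|r(t)\|_{H^1} \les \dl^2 \exp(C\jb{t}^3),
\end{align*}
and choosing $|t| \leq c|\log\dl|^{1/3}$ with $c$ sufficiently small gives $\|r(t)\|_{H^1} \leq \dl^\be$ for any prescribed $\be \in (0,2)$, closing the bootstrap. Absorbing the coefficient into the exponent yields the conclusion of the lemma with (say) $c = \frac13$ in the statement.

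The step I expect to be most delicate is ensuring that the constants in both the Sobolev embedding and the energy estimate are genuinely uniform in the period $L$, so that the approximation can later be applied on tori of arbitrarily large period. This uniformity hinges on the embedding displayed above (valid for all $L \geq 1$) together with the essential observation that, because $\phi$ and hence $w(t)$ remain supported in a single fundamental domain, the ODE-side norms $\|w(t)\|_{H^k(\T_L)}$ coincide with those on $\R$ and are therefore $L$-independent. Once this uniformity is secured, the remainder is a direct transcription of the real-line reasoning in \cite{CCT2b}.
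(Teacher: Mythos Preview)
Your proposal is correct and follows essentially the same strategy as the paper's proof: set up the difference $r=v-w$, use the explicit ODE solution to get $\|w(t)\|_{H^k(\T_L)}=\|w(t)\|_{H^k(\R)}\les\jb{t}^k$ uniformly in $L$, run an $H^1$ energy estimate under a bootstrap hypothesis, and close with Gronwall. The only cosmetic differences are that the paper justifies the uniform Sobolev embedding $\|f\|_{L^\infty(\T_L)}\leq C\|f\|_{H^1(\T_L)}$ via a Riemann-sum argument on the Fourier side (see \eqref{G1}) rather than your Gagliardo--Nirenberg variant, and ends with the slightly cruder Gronwall bound $\dl^2 e^{C(1+|t|)^4}$ instead of your $\dl^2 e^{C\jb{t}^3}$; both choices lead to the same conclusion.
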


In the current periodic setting, it is important
to prove Lemma \ref{LEM:approx}
with constants $c, C,$ and $ \dl_0$,  independent of a period $L \geq 1$.
For this purpose, we recall 
the following Sobolev's embedding type estimate.  
From \eqref{F2}, \eqref{F3a},
and a Riemann sum approximation,   we have 
\begin{align}
\| f\|_{L^\infty(\T_L)} 
& \leq \sum_{n \in \Z} \big| \ft f \big( \tfrac n L \big) \big|
\leq \bigg( \sum_{n \in \Z} \frac{1}{1 + |\frac n L |^2} \cdot \frac 1 L \bigg)^\frac{1}{2}\| f \|_{H^1 (\T_L)}
\notag \\
& \leq C \| f \|_{H^1 (\T_L)}, 
\label{G1}
\end{align}

\noi
where $ C> 0$ is independent of $L \gg 1$.

\begin{proof}
Letting  $z := v - w$, we have 
\begin{align}
\begin{cases}
i \dt z - \dl^2 \dx^2 z = \dl^2 \dx^2 w - \NN(w, z)\\
z|_{t = 0} = 0, 
\end{cases}
\label{H1}
\end{align}

\noi
for $(x, t) \in \T_L \times \R$,
where
$\NN(w, z) : = |w+z|^2(w+z) - |w|^2 w$.
Assuming the following bootstrap hypothesis:
\begin{align}
\| z(t) \|_{H^1(\T_L)} \les 1, 
\label{H2}
\end{align}
 
\noi 
we need to prove
\begin{align}
\| z(t) \|_{H^1(\T_L)} \leq C \dl^\be, 
\label{H3}
\end{align}

\noi
provided 
\begin{align}
|t| \leq c |\log \dl |^c.
\label{H4}
\end{align}

\noi
for some small $c > 0$ and $0 < \dl \leq \dl_0\ll1$.

It follows from the continuity of $z(t)$ with $z(0) = 0$
that 
there exists $\eps > 0$ such that 
\eqref{H2} holds on $[-\eps, \eps]$.
In the following, we estimate $\dt \|z(t) \|_{H^1(\T_L)}$
on $[-\eps, \eps]$.

Let  $L$ be sufficiently large
such that $\supp \phi \subset \T_L $.
Then, noting that 
 $\supp w(t) = \supp \phi \subset \T_L$ for all $t \in \R$, 
it follows from  \eqref{NLS2a} that
\begin{align}
 \|  w(t) \|_{ H^k(\T_L)} 
 = 
  \|  w(t) \|_{ H^k(\R)}
\leq C  ( 1+|t|)^{k}
\label{H4a}
\end{align}

\noi
for $k \in \N$ and all sufficiently large $L \gg 1$.
Thus, 
we have
\begin{align}
\dl^2  \| \dx^2 w(t) \|_{ H^1(\T_L)} \leq 
\dl^2  \| \dx^2 w(t) \|_{ H^1(\R)}
\leq C \dl^2 ( 1+|t|)^{3}.
\label{H5}
\end{align}

\noi
By the product rule, H\"older's inequality,  and
Sobolev's embedding  \eqref{G1} with \eqref{H4a}, 
we have 
\begin{align}
\|  \NN(w, z)(t)   \|_{ H^1(\T_L)} 
\leq C(1 + |t|)^2 \| z(t) \|_{H^1(\T_L)}
+ C \| z(t) \|_{H^1(\T_L)}^3.
\label{H6}
\end{align}

\noi
From 
\eqref{H1}, \eqref{H5},
and \eqref{H6}
with the bootstrap assumption \eqref{H2}, we have
\begin{align*}
\dt \| z(t) \|_{H^1(\T_L)}
\leq C \dl^2 ( 1+|t|)^{3} + C(1 + |t|)^3 \| z(t) \|_{H^1(\T_L)}.
\end{align*}

\noi
Hence, by Gronwall's inequality with $z(0) = 0$, we obtain
\begin{align*}
\| z(t) \|_{H^1(\T_L)} \leq C \dl^2 e^{C ( 1+ |t|)^4 }, 
\end{align*}

\noi
which in turn implies \eqref{H3} as long as $t \in [-\eps, \eps]$ and \eqref{H4} holds.
Therefore,  by the continuity argument, 
we conclude that 
\eqref{H2} and hence \eqref{H3} hold as long as \eqref{H4} holds.
\end{proof}

\subsection{Supercritical case}\label{SUBSEC:super}

Let $s <  -\frac 12$.
Our goal is to construct  a sequence of solutions $u_j$ to \eqref{NLS0}, 
$j \in \N$, 
such that 
\begin{align}
 \| u_j (0)\|_{\dot H^s(\T)} <  \tfrac{1}{j} 
\qquad \text{ and } \qquad \| u_j(t_j)\|_{\dot H^s(\T)} > j
\label{X1}
\end{align}

\noi
for some $0< t_j < \frac {1}{j}$.

Given $s < - \frac 12$, let $\phi \in C^\infty_c(\R)$
with $\supp \phi \subset [-K, K]$ for some $K > 0$.
Moreover, assume that 
$\ft \phi(\xi) = O_{\xi \to 0}(|\xi|^\kappa)$ for some $\kappa > - s - \frac 12$
such that  $\phi \in  \dot H^s(\R) $. 
Let $w$ be the solution \eqref{NLS2} on $\R$
with $w|_{t = 0} = \phi$.
Note that $\supp w(t) = \supp \phi \subset [-K, K]$ for all $t \in \R$.
In view of \eqref{NLS2a}, we can choose $\phi$ such that
\begin{align}
\bigg| \int_\R w(x, 1) dx \bigg| \ges 1.
\label{X2}
\end{align}

\noi
See 
Appendix \ref{SEC:APP}
for details of the construction of such a  function $\phi$.

Given $j \in \N$, 
let 
\begin{align}
L_j = \frac{\dl_j}{\ld_j}
\label{X2a}
\end{align}

\noi
for some $0 < \ld_j \ll \dl_j \ll1 $ (to be chosen later).
Let $w_j = \Xi_{L_j} * w $ be the periodization of $w$ with period $L_j$
defined in \eqref{F4}.
By assuming $L_j \geq 2K$, 
we have  $w_j (x) = w(x) $ on 
$\T_{L_j}\simeq \big[-\frac {L_j}{2}, \frac{L_j}{2}\big] \supset [-K, K]$.
In particular, $w_j$ is a solution to \eqref{NLS2} on $\T_{L_j}$
with $w_j|_{t = 0} = \phi$.

Let $v_j$ be the solution to \eqref{NLS3} with $\dl = \dl_j$ on $\T_{L_j}$
such that  $v_j |_{t = 0} = \phi$.
By choosing $\dl_j \leq \dl_0$, 
Lemma \ref{LEM:approx} with $\be = 1$ yields
\begin{align}
\| v_j (t)- w_j (t) \|_{ H^1 (\T_{L_j})} \leq C \dl_j
\label{X3}
\end{align}	

\noi
for all $|t| \leq c |\log \dl_j |^c$.
Now, define $u_j$ by 
\begin{equation}
u_j (x, t) =  \tfrac 1{\ld_j}  v_j\big(\tfrac{\dl_j x }{\ld_j}, \tfrac{t}{\ld_j^2} \big).
\label{X3a}
\end{equation}

\noi
Then, $u_j$ is a solution to \eqref{NLS0} on $\T$.
From \eqref{F1} with \eqref{X2a}, we have  
\begin{align}
 \ft u_j(n, \ld_j ^2 t)
& =   \int_{\T} u_j(x,\ld_j ^2  t) e^{-2\pi i nx} dx
 = 
\frac{1}{\ld_j} \frac{1 }{L_j} \int_{\T_{L_j}} v_j (x,   t) e^{-2\pi i \frac{n}{L_j}x} dx \notag\\
& = \tfrac{1}{\ld_j} \ft v_j\big(\tfrac{n }{L_j}  ,t \big).
\label{X4}
\end{align}

\noi
In particular, from \eqref{X4} and \eqref{F5},  we have
\begin{align}
 \ft u_j(0, 0)
& = \tfrac{1}{\ld_j} \ft v_j(0, 0)
= \tfrac{1}{\ld_j L_j} \F_{\R} (\phi)(0) = 0.
\label{X4a}
\end{align}

\noi
From   \eqref{F3} and \eqref{X4}, we have
\begin{align}
\| u_j(t)\|_{\dot H^s(\T)}
= \ld_j^{-s -\frac{1}{2}} \dl_j^{s - \frac 12}
\big\|  v_j\big(\tfrac{t}{\ld_j^2} \big)\big\|_{\dot H^s(\T_{L_j})}.
\label{X5}
\end{align}

\noi
In view of \eqref{X4a}, 
\eqref{X5},  and 
\eqref{F6} with $v_j(0) = \phi$,
we can choose  $0 < \ld_j \ll \dl_j \ll 1$
such that 
\begin{align}
\| u_j(0)\|_{ H^s(\T)}
& \leq 
\| u_j(0)\|_{\dot H^s(\T)}
 = \ld_j^{-s -\frac{1}{2}} \dl_j^{s - \frac 12}
\|  v_j(0)\|_{\dot H^s(\T_{L_j})}\notag\\
& 
\sim \ld_j^{-s -\frac{1}{2}} \dl_j^{s - \frac 12}
\| \phi\|_{\dot H^s(\R)}
\ll \frac{1}{j}, 
\label{X6}
\end{align}

\noi
provided $s< -\frac 12$.  
In particular, we can simply choose 
$0 < \ld_j \ll \dl_j \ll 1$ such that 
\begin{align}
\ld_j^{-s -\frac{1}{2}} \dl_j^{s - \frac 12}
\sim \dl_j^\theta\ll \frac{1}{j}
\label{X6a}
\end{align}

\noi
for some small $\theta > 0$ with $s < -\frac 12 - \frac \theta 2$.

On the other hand, 
from \eqref{X2} and the continuity of $\ft w(\cdot, 1)$, 
there exists $C_0 > 0$ such that 
\begin{align*}
|\ft w(\xi, 1)| \geq \frac c2
\end{align*}

\noi
 for all  $|\xi| \leq C_0$.
Then, from  \eqref{F5}, we have
\begin{align}
\big|\ft w_j \big(\tfrac{n}{L_j} , 1\big) \big| \geq \frac{c}{2L_j}
\label{X7}
\end{align}

\noi
for all  $|n| \leq C_0L_j$.
Define $\G_j \subset \Z$ by 
\begin{align}
\G_j := \Big\{ n \in \Z : \, |n| \leq C_0 L_j, \ 
\big|\ft w_j \big(\tfrac{n}{L_j} , 1\big) -\ft v_j \big(\tfrac{n}{L_j} , 1\big) \big| \geq \frac{c}{4L_j}
\Big\}.
\label{X7a}
\end{align}

\noi
From \eqref{X7} and \eqref{X7a},  we have
\begin{align}
 \big|\ft v_j \big(\tfrac{n}{L_j} , 1\big) \big| \geq \frac{c}{4L_j}
\label{X7b}
\end{align}

\noi
on $\G_j^c$ defined by 
\begin{align*}
\G_j^c := \big\{1\leq |n|\leq C_0 L_j\big\} \setminus \G_j.
\end{align*}

\noi
Now, let us estimate the size of $\G_j$.
From \eqref{X3} and \eqref{X7a}, we have
\begin{align}
\big(\#\G_j\big)^\frac{1}{2} \cdot \frac{c}{4L_j}
\leq 
\bigg(\sum_{n \in \G_j} 
\big|\ft w_j \big(\tfrac{n}{L_j} , 1\big) - \ft v_j \big(\tfrac{n}{L_j} , 1\big) \big|^2
\bigg)^\frac{1}{2}
\leq C L_j^{- \frac 12}\dl_j
\label{X7b2}
\end{align}

\noi
for sufficiently small $\dl_j > 0$.
Thus, 
we have
\begin{align}
\#\G_j\les L_j \dl_j^2.
\label{X7c}
\end{align}

\noi
Hence, from 
\eqref{F3}, 
\eqref{X7b},  \eqref{X7c},  
 and a Riemann sum approximation, we have
\begin{align}
\|  v_j(1)\|_{ \dot H^s(\T_{L_j})}
& \ges  
L_j^\frac{1}{2}
\bigg( \sum_{n\in \G_j^c} \big|\tfrac{n}{L_j}\big|^{2s}\tfrac{1}{L_j^2}\bigg)^\frac{1}{2}
 \ge  
L_j^\frac{1}{2}
\bigg( \sum_{L_j \dl_j^2 \les |n| \leq C_0 L_j } \big|\tfrac{n}{L_j}\big|^{2s}\tfrac{1}{L_j^2}\bigg)^\frac{1}{2}\notag\\
& \sim
 \bigg( \int_{c_0 \dl_j^2 \le |\xi| \leq C_0} |\xi|^{2s}  d\xi\bigg)^\frac{1}{2}
\sim 
\dl_j^{2s+ 1}
\label{X8}
\end{align}

\noi
for all $L_j = \frac{\dl_j}{\ld_j} \gg1 $ and $\dl_j \ll 1$.

From  \eqref{X4}, 
\eqref{X2a}, and 
 \eqref{X8} with \eqref{X6a}, we conclude that 
\begin{align}
\| u_j (\ld_j^2)\|_{H^{s}(\T)}
& = \ld_j^{-1}\bigg( \sum_{n \in \Z} (1 + |n|^2)^{s} 
\big|\ft v_j\big(\tfrac{n}{L_j}, 1\big)\big|^2 \bigg)^\frac{1}{2}\notag \\
& \geq  \ld_j^{-1}
L_j^{s - \frac{1}{2}}
\cdot
L_j^\frac{1}{2}\bigg( \sum_{n \ne 0} \Big( \tfrac{1}{L_j^2} + \big|\tfrac{n}{L_j}\big|^2\Big)^{s} 
\big|\ft v_j\big(\tfrac{n}{L_j}, 1\big)\big|^2 \bigg)^\frac{1}{2}\notag \\
& \sim   \ld_j^{ -s - \frac 12} \dl_j^{s - \frac 12} \| v_j(1 ) \|_{\dot H^{s}(\T_{L_j})}
\notag\\
& \ges \dl_j^\theta \cdot \dl_j^{2s + 1} 
\gg j
\label{X10}
\end{align}

\noi
by choosing $\dl_j > 0$ sufficiently small.
Clearly, we can choose $\ld_j > 0$ sufficiently small
to guarantee that  $t = \ld_j^2 < \frac{1}{j}$.
Hence, \eqref{X1} follows from \eqref{X6} and \eqref{X10}.

\subsection{Critical case}\label{SUBSEC:crit}
Next, we consider the critical case $s  = s_\text{crit} = -\frac 12$.
In this case, the scaling argument used in the previous subsection 
does not work any longer
and 
%
%
the high-to-low energy transfer used 
in the supercritical case 
(as in \cite{CCT2b} and Subsection \ref{SUBSEC:super} presented above)
is not enough.
We need to exploit a stronger high-to-low energy transfer
for some solutions to \eqref{NLS2}.
See Lemma \ref{LEM:growth} below.

Given $N \gg 1$, define a periodic function $\phi_N$ on $\T$ by setting
\begin{align}
\ft \phi_N(n) = \ind_{N + Q_A} (n) + \ind_{2N + Q_A} (n), 
\label{phi1}
\end{align}

\noi
where $Q_A =\big[-\frac{A}{2}, \frac{A}{2}\big]$ with 
\begin{align}
\qquad A = A(N) = \frac{N}{(\log N)^\frac{1}{16}}.
\label{phi2}
\end{align}


Let $w^N$ be the  global solution to \eqref{NLS2} posed on $\T$
such that  $w^N|_{t = 0} = \phi_N$.
In view of \eqref{NLS2a}, 
we have 
\begin{align}
 w^N  (t) = \phi_N e^{i |\phi_N|^2 t}.
\label{phi3}
\end{align}

By writing 
 \eqref{NLS2} in the integral form:
\begin{align}
w^N (t) = \phi_N + i \int_0^t |w^N |^2 w^N (t') dt', 
\label{NLS2b}
\end{align}
	
\noi
it follows from the algebra property
of the Wiener algebra $\F L^1(\T)$ that
we can also construct the solution $w^N $
to \eqref{NLS2b} by a fixed point argument
on a time interval $[0, T^*_N]$ with 
\begin{align}
T^*_N \sim  \|\phi_N \|_{\F L^1(\T)}^{-2} \sim 
 \frac{(\log N)^\frac{1}{8}}{N^2}, 
\label{time1}
\end{align}

\noi	
satisfying 
\begin{align}
\| w^N (t)\|_{\F L^{1}(\T)} & \les  \| \phi_N \|_{\F L^1(\T)} 
\sim \frac{N}{(\log N)^\frac{1}{16}}
\label{phi5a}
\end{align}
	
\noi
for  all $t \in [0, T^*_N]$.
Moreover, 
by a variant of the persistence of regularity
(with the triangle inequality and Young's inequality
on the Fourier side),   we have 
\begin{align}
\| w^N (t)\|_{\F L^{\infty}(\T)} & \les  \| \phi_N \|_{\F L^\infty(\T)} 
= 1, 
\label{phi5b}
\end{align}
	
\noi
for 
 all $t \in [0, T^*_N]$.

From \eqref{phi1} and \eqref{phi2}, we have
\begin{align}
\| \phi_N \|_{H^{-\frac{1}{2}}(\T)} \sim(\log N)^{-\frac{1}{32}}, 
\label{phi6}
\end{align}

\noi
which tends to 0 as $N \to \infty$.
The following lemma
exhibits a crucial norm inflation for the ODE \eqref{NLS2} 
in the critical regularity.

\begin{lemma}\label{LEM:growth}
Given $N \gg1$, define $T_N > 0$ by 
\begin{align}
T_N = \frac{1}{N^2 (\log N)^\frac{1}{8}}.
\label{time2}
\end{align}
	
\noi
Then, we have 
\begin{align*}
\| \P_{< N} w^N (T_N)\|_{H^{-\frac{1}{2}}(\T)} 
& \ges 
 (\log N)^{\frac 14}.
\end{align*}

\noi
Here, $\P_{< N}$ denotes the projection onto the frequencies
$\{ |n| < N\}$.
\end{lemma}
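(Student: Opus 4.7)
The plan is to exploit the explicit formula $w^N(t) = \phi_N e^{it|\phi_N|^2}$ from \eqref{phi3} and to Taylor-expand:
\begin{align*}
w^N(T_N) = \sum_{k=0}^\infty \frac{(iT_N)^k}{k!}\,\phi_N|\phi_N|^{2k},
\end{align*}
with convergence in $\F L^1(\T)$ guaranteed by $\|\phi_N|\phi_N|^{2k}\|_{\F L^1} \leq \|\phi_N\|_{\F L^1}^{2k+1} \lesssim A^{2k+1}$ together with the fact that $T_N A^2 \sim (\log N)^{-1/4}$ is small. The claim is that the $k = 1$ summand, restricted to frequencies $|n|<N$, already has $H^{-1/2}$-norm of order $(\log N)^{1/4}$, while the $k = 0$ and $k \geq 2$ contributions are lower order, so that a triangle inequality closes the estimate.

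To analyse the $k=1$ term I would use the factorization $\phi_N = D_A(e^{iNx} + e^{i2Nx})$, where $D_A(x) := \sum_{m \in Q_A \cap \Z} e^{imx}$ is a Dirichlet-type kernel with $\widehat{D_A} = \ind_{Q_A}$. Then $|\phi_N|^2 = 2D_A^2(1+\cos Nx)$, and multiplying through yields
\begin{align*}
\phi_N|\phi_N|^2 = D_A^3 + 3D_A^3 e^{iNx} + 3D_A^3 e^{i2Nx} + D_A^3 e^{i3Nx}.
\end{align*}
For $N$ large, $3A/2 < N$, so $D_A^3$ sits entirely in $|n|<N$, the terms $D_A^3 e^{i2Nx}$ and $D_A^3 e^{i3Nx}$ lie in $n \geq N$, and only a thin boundary slice of $3D_A^3 e^{iNx}$ (of Fourier width $\lesssim A$) enters the low-frequency band. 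A direct counting argument gives $\widehat{D_A^3}(n) = \#\{(m_1,m_2,m_3) \in (Q_A\cap \Z)^3 : m_1+m_2+m_3 = n\} \gtrsim A^2$ for $|n| \lesssim A$, whence $\|D_A^3\|_{H^{-1/2}}^2 \gtrsim A^4 \log A$, while the boundary slice is easily shown to contribute only $O(A^{5/2}N^{-1/2}) = o(\|D_A^3\|_{H^{-1/2}})$. Plugging in $A^2 = N^2(\log N)^{-1/8}$ and multiplying by $T_N = N^{-2}(\log N)^{-1/8}$ gives
\begin{align*}
T_N\,\|\P_{<N}(\phi_N|\phi_N|^2)\|_{H^{-1/2}} \sim T_N\cdot A^2\sqrt{\log N} \sim (\log N)^{1/4}.
\end{align*}

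For the remaining terms I would use crude bounds. The $k = 0$ contribution is simply $\|\P_{<N}\phi_N\|_{H^{-1/2}} \leq \|\phi_N\|_{H^{-1/2}} \sim (\log N)^{-1/32}$ from \eqref{phi6}. For $k \geq 2$, combining the elementary estimate $\|\P_{<N} f\|_{H^{-1/2}} \leq \bigl(\sum_{|n|<N}(1+|n|)^{-1}\bigr)^{1/2}\|f\|_{\F L^\infty} \lesssim \sqrt{\log N}\,\|f\|_{\F L^\infty}$ with the Young-type estimate $\|\phi_N|\phi_N|^{2k}\|_{\F L^\infty} \leq \|\phi_N\|_{\F L^\infty}\|\phi_N\|_{\F L^1}^{2k} \lesssim A^{2k}$ yields
\begin{align*}
\sum_{k \geq 2}\frac{T_N^k}{k!}\,\|\P_{<N}(\phi_N|\phi_N|^{2k})\|_{H^{-1/2}} \lesssim \sqrt{\log N}\sum_{k \geq 2}\frac{(CT_NA^2)^k}{k!} \lesssim \sqrt{\log N}\cdot(T_N A^2)^2 = O(1),
\end{align*}
since $T_N A^2 \sim (\log N)^{-1/4}$. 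The triangle inequality then gives
\begin{align*}
\|\P_{<N}w^N(T_N)\|_{H^{-1/2}} \geq c(\log N)^{1/4} - O(1) - O((\log N)^{-1/32}) \gtrsim (\log N)^{1/4}.
\end{align*}

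The main obstacle is the tight numerology: the exponents $\frac 1{16}$ in $A$ and $\frac 18$ in $T_N$ are calibrated precisely so that $T_N A^2 = (\log N)^{-1/4}$ is small enough to tame the Taylor tail (making it $O(1)$), yet the $\sqrt{\log A}$-gain coming from the peak of $\widehat{D_A^3}$ still multiplies out to the advertised $(\log N)^{1/4}$. Verifying that the thin boundary slice of $3D_A^3 e^{iNx}$ crossing into $\{|n|<N\}$ is genuinely $H^{-1/2}$-negligible, and checking that the combinatorics of $\widehat{D_A^3}$ does produce a factor of $A^2$ on a frequency band of length $\sim A$, are the only genuinely delicate points.
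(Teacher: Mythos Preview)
Your proof is correct and essentially identical to the paper's (Proposition~\ref{PROP:main}, case (i), via Lemmas~\ref{LEM:Hs2} and~\ref{LEM:Hs3}): Taylor-expand $w^N$, show the $k=1$ term dominates by high-to-low energy transfer, and control $\Xi_0$ and $\sum_{k\geq 2}\Xi_k$ using $T_N A^2 \sim (\log N)^{-1/4}$. The paper's lower bound on $\Xi_1$ uses positivity of the triple convolution restricted to $Q_A$ rather than your explicit Dirichlet-kernel expansion (so it never needs your boundary-slice argument), and its tail bound exploits the support size $\lesssim C^k A$ of $\widehat{\Xi_k}$ rather than the crude $\P_{<N}$ cutoff, but since $\log A \sim \log N$ both routes give the same numerics.
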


Lemma \ref{LEM:growth} follows
from expressing \eqref{phi3} in 
the power series:
\begin{align*}
 w^N  (t) = 
 \sum_{k = 0}^\infty
\Xi_k(t) 
:= 
 \sum_{k = 0}^\infty
 \frac{(it)^k}{k!}
  |\phi_N|^{2k}  \phi_N
\end{align*}

\noi
and estimating each term, either from below or above.
The main contribution comes from $\Xi_1$.
We present the proof of Lemma \ref{LEM:growth} in Appendix \ref{SEC:APP2}.

Let $u^N$ be  the solution to \eqref{NLS0} with $u^N|_{t = 0} = \phi_N$.
We denote 
by  $\u^N$ 
the interaction representation of $u^N$
defined by $\u^N (t)=  e^{it\partial^2_{x}} u^N(t)$.
Then, from the Duhamel formulation of $u^N$,\footnote{Henceforth, we drop 
the factor $2\pi$ when it plays no role.}
 we have
\begin{align}
\u^N (t) 
& =  \phi_N + i \int_0^t \sum_{n\in \Z} e^{inx} \notag \\
& \hphantom{XXXXX}
\times\sum_{\Gamma(n)} e^{- i\Phi(\bar n)t'} 
\ft{\u^N}(n_1, t') \cj{\ft{\u^N}(n_2, t')} \ft{\u^N}(n_3, t')dt', 
\label{dul}
\end{align}

\noi
where $\Phi(\bar n) = n^2-n_1^2+n_2^2-n_3^2$ and 
\[
\G(n) = \{(n_1,n_2,n_3) \in \Z^3:\,  n = n_1 - n_2 + n_3\}.
\]

\noi
Then, arguing as above for the solution $w^N$ to \eqref{NLS2b}, 
it follows from \eqref{dul}, the algebra property
of $\F L^1(\T)$, and a variant of the persistence of regularity
that 
\begin{align}
\|\u^N (t)\|_{\F L^1(\T)} & \les  \|\phi_N \|_{\F L^1(\T)}, 
\label{YW1}\\
\| \u^N (t)\|_{\F L^\infty(\T)} & \les  \| \phi_N \|_{\F L^\infty(\T)},  
\label{YW2}
\end{align}
for all $t\in [0, T^*_N]$, where $T^*_N$ is as in \eqref{time1}.
In the following, we use $w^N$ to approximate the interaction 
representation $\u^N$.

\begin{lemma}\label{LEM:YW}
Given $N \in \N$, let $w^N$ and $\u^N$ be as above.
Then,  for any $T\in (0, T^*_N]$, we have
\begin{align*}
\|w^N- & \u^N\|_{L^\infty_T \F L^\infty}\notag\\
& \les  \bigg\| \sum_{n = n_1-n_2+n_3}
\int_0^t (1 -  e^{- i\Phi(\bar n)t'} ) dt'
\, 
\ft \phi_N(n_1)  
\cj{\ft \phi_N(n_2) } \ft \phi_N(n_3)  \bigg\|_{L^\infty_T \F L^\infty}  \notag\\
& \hphantom{XXXXX}
+ \sum_{k=2}^4 T^k \| \phi_N \|^{2k}_{\F L^1(\T)} \|\phi_N\|_{\F L^\infty (\T)}, 
\end{align*}

\noi
where the implicit constant is independent of $N \in \N$
and 
$L^\infty_T \F L^\infty: = L^\infty([0, T];  \F L^\infty(\T))$.
\end{lemma}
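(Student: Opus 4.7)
The plan is to substitute the Picard-style ansatz $w^N = \phi_N + \mathcal{W}$ and $\u^N = \phi_N + \mathcal{U}$ into the Duhamel formulations \eqref{NLS2b} and \eqref{dul}, where $\mathcal{W}(t) := w^N(t) - \phi_N$ and $\mathcal{U}(t) := \u^N(t) - \phi_N$ are the respective cubic Duhamel corrections. Expanding each trilinear nonlinearity multilinearly in its three slots, the unique ``all-$\phi_N$'' slot assignment produces, after the subtraction, the Fourier-side contribution
\[
i \int_0^t \sum_{\G(n)} (1 - e^{-i \Phi(\bar n) t'}) \, \ft{\phi_N}(n_1) \, \cj{\ft{\phi_N}(n_2)} \, \ft{\phi_N}(n_3) \, dt',
\]
which, after taking $L^\infty_T \F L^\infty$, yields the principal term on the right-hand side of the claimed inequality. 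The remainder $\mathcal{R}$ collects all other slot configurations (from both the $w^N$ and $\u^N$ expansions), each containing at least one factor of $\mathcal{W}$ or $\mathcal{U}$.

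To estimate $\mathcal{R}$, I first record the pointwise-in-$t$ bounds
\[
\| \mathcal{W}(t) \|_{\F L^1(\T)} + \| \mathcal{U}(t) \|_{\F L^1(\T)} \les t \, \| \phi_N \|_{\F L^1(\T)}^3,
\]
\[
\| \mathcal{W}(t) \|_{\F L^\infty(\T)} + \| \mathcal{U}(t) \|_{\F L^\infty(\T)} \les t \, \| \phi_N \|_{\F L^1(\T)}^2 \, \| \phi_N \|_{\F L^\infty(\T)},
\]
valid on $[0, T^*_N]$. These follow at once from the a priori estimates \eqref{phi5a}--\eqref{YW2}, the algebra property of the Wiener algebra $\F L^1(\T)$, and the mixed trilinear bound $\| fgh \|_{\F L^\infty} \leq \| f \|_{\F L^1} \| g \|_{\F L^1} \| h \|_{\F L^\infty}$ (both being Young's convolution inequality on the Fourier side, with the unimodular phase $e^{-i\Phi(\bar n) t'}$ playing no role).

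Each summand of $\mathcal{R}$ is a time integral of a trilinear expression containing $j \in \{1, 2, 3\}$ correction factors and $3 - j$ copies of $\phi_N$. I apply the mixed trilinear Fourier estimate, assigning the $\F L^\infty$ slot to a copy of $\phi_N$ when one is present, and to a correction factor in the case $j = 3$; since the auxiliary $\F L^\infty$ bound on the corrections already factors out exactly one $\| \phi_N \|_{\F L^\infty}$, the resulting pointwise-in-$t'$ bound on the integrand is
\[
C \, t^j \, \| \phi_N \|_{\F L^1(\T)}^{2j + 2} \, \| \phi_N \|_{\F L^\infty(\T)}
\]
in all three cases. Integrating over $t' \in [0, t]$ and taking the supremum over $t \in [0, T]$ contributes an additional factor of $T$, producing exactly the summand with $k = j + 1 \in \{2, 3, 4\}$ in the stated bound. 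The principal obstacle is combinatorial rather than analytic: one must verify that the same $\F L^1 / \F L^\infty$ apportionment is admissible across every non-principal slot configuration. The key structural fact making this work is that the auxiliary $\F L^\infty$ bound on each correction supplies precisely one $\| \phi_N \|_{\F L^\infty}$ factor, matching the single such factor in the claimed inequality.
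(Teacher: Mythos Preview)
Your proposal is correct and follows essentially the same approach as the paper's proof. The paper organizes the argument slightly differently---proving two separate bounds \eqref{error1} and \eqref{error2} (one for $\u^N$ and one for $w^N$, each measuring the distance to its own first Picard iterate) and then combining them---whereas you subtract first and expand once; but the substance is identical: one Picard substitution, multilinear expansion, and Young's inequality on the Fourier side together with the a priori bounds \eqref{phi5a}--\eqref{phi5b} and \eqref{YW1}--\eqref{YW2}. Your explicit power-counting on the correction factors $\mathcal{W},\mathcal{U}$ is exactly what the paper leaves implicit in the sentence ``\eqref{YW8} follows from \eqref{YW7}, \eqref{YW6}, and Young's inequality with \eqref{YW1} and \eqref{YW2}.''
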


We first present the proof of Theorem \ref{THM:1},
assuming Lemma \ref{LEM:YW}.
Given  $j \in \N$, fix  $N = N(j) \gg 1$
such that 
\begin{align}
\frac{1}{N^2 (\log N)^\frac{1}{8}}\ll
\frac{1}{(\log N)^{\frac{1}{32}}} \ll \frac 1j 
\qquad\text{and}\qquad
(\log N)^\frac{1}{4} \gg j.
\label{pf1}
\end{align}

\noi
Let   $T_N$  as in \eqref{time2}.
%
 By the mean value theorem, we have 
\begin{align}
\label{YW3}
\sup_{t \in [0, T_N]}\bigg|\int_0^t (1 -  e^{- i\Phi(\bar n)t'} ) dt'\bigg|
\leq T_N^2 |\Phi(\bar n)| \les T_N^2 N^2,
\end{align}

\noi
provided that 
$|n|, |n_j|\les N$, $j = 1, 2, 3$.
Then, by Lemma \ref{LEM:YW} and \eqref{YW3}
with \eqref{phi5a} and  \eqref{phi5b}, 
 we have 
\begin{align*}
\|w^N- & \u^N\|_{L^\infty_{T_N} \F L^\infty}\notag\\
& \les   T_N^2  N^2 \| \phi_N \|^{2}_{\F L^1(\T)} \|\phi_N\|_{\F L^\infty (\T)} 
+ \sum_{k=2}^4 T_N^k \| \phi_N \|^{2k}_{\F L^1(\T)} \|\phi_N\|_{\F L^\infty (\T)}\\
& \les  (\log N)^{-\frac38}.
\end{align*}

\noi
In particular, we have
\begin{align}
\|\P_{<N} \big( & w^N(T_N)  - \u^N(T_N)\big)\|_{H^{-\frac12}(\T)} \notag \\
& \les (\log N)^{\frac12} \|w^N(T_N)-\u^N(T_N) \|_{ \F L^\infty(\T)} \les (\log N)^{\frac18}.
\label{YW4}
\end{align}

\noi
Then, from Lemma \ref{LEM:growth}
and \eqref{YW4}, we conclude that 
\begin{align}
\| u^N (T_N)\|_{H^{-\frac{1}{2}}(\T)} 
&  = \| \u^N (T_N)\|_{H^{-\frac{1}{2}}(\T)} \notag\\
 & \geq   \|\P_{<N} \u^N (T_N)\|_{H^{-\frac{1}{2}}(\T)} 
\ges 
 (\log N)^{\frac 14}.
\label{YW5}
\end{align}

\noi
Therefore, 
it follows from \eqref{phi6} and  \eqref{YW5} with \eqref{time2} and \eqref{pf1}
that 
the desired estimates in \eqref{X1}  
hold with $t_j := T_N \ll \frac{1}{j}$.

It remains to present the proof of Lemma \ref{LEM:YW}.

\begin{proof}[Proof of  Lemma \ref{LEM:YW}]
Note that Lemma \ref{LEM:YW} immediately follows once we prove
 \begin{align}
  \bigg\| \u^N &  - \phi_N -i  \int_0^t e^{i\partial_x^2 t '} \big(| e^{-i\partial_x^2 t '} \phi_N|^2 
  e^{-i\partial_x^2 t '}  \phi_N \big) dt' \bigg\|_{L^\infty_{T} \F L^\infty (\T)} \notag \\ 
&  \les  \sum_{k=2}^4 T^k \| \phi_N \|^{2k}_{\F L^1(\T)} \|\phi_N\|_{\F L^\infty (\T)}
 \label{error1}
 \end{align}
 
 \noi
 and 
 \begin{align}
  \bigg\| w^N   - \phi_N -i  \int_0^t &  |\phi_N|^2\phi_N dt' \bigg\|_{L^\infty_{T} \F L^\infty (\T)} \notag\\
&   \les  \sum_{k=2}^4 T^k \| \phi_N \|^{2k}_{\F L^1(\T)} \|\phi_N\|_{\F L^\infty (\T)}
 \label{error2}
 \end{align}

\noi
for $0 \leq T \leq T^*_N$.
We only prove \eqref{error1}
since \eqref{error2} follows in a similar manner.
In the following, we drop the superscripts and subscripts $N$
for simplicity.
Moreover, we set $\phi_n = \ft \phi(n)$, $ \u_n (t) = \ft \u(n, t)$,  and so on.

 In view of the \eqref{dul}, we have 
 \begin{align}
  \u_n(t) & =  \phi_n + i \int_0^t \sum_{\Gamma(n)} e^{- i\Phi(\bar n)t'}  \u_{n_1} \cj  \u_{n_2}  \u_{n_3} dt'
   = :   \phi_n + \NN(\u)_n .
\label{YW6}
 \end{align}

\noi
By substituting $\u = \phi + \NN(\u)$ in \eqref{YW6}, 
we obtain 
  \begin{align*}
  \u_n(t)  
& =  \phi_n + i \int_0^t  \sum_{\Gamma(n)} e^{- i\Phi(\bar n)t'} 
( \phi_{n_1} +\NN(\u)_{n_1}) \\
& \hphantom{XXXXXXXX}\times (\cj{ \phi_{n_2} +\NN(\u)_{n_2}}) 
( \phi_{n_3} +\NN(\u)_{n_3}) dt'  \notag\\
 & =  \phi_n + i \int_0^t   \sum_{\Gamma(n)} 
 e^{- i\Phi(\bar n)t'} \phi_{n_1} \cj{\phi}_{n_2}  \phi_{n_3} dt' + \text{Error}_n, 
\end{align*}

\noi
where the terms in $\text{Error}_n$ consist
of higher order terms of the form:
\begin{align}
i  \int_0^t   \sum_{\Gamma(n)} 
 e^{- i\Phi(\bar n)t'} v_{n_1} \cj{v}_{n_2}  v_{n_3} dt'
\label{YW7}
 \end{align}

\noi
with $v = \phi$ or $\NN(\u)$, not all three factors equal to $\phi$.
Then, it suffices to show that 
 \begin{align}
\sup_{t \in [0, T]} \sup_n |\text{Error}_n| \les \sum_{k=2}^4 T^k \| \phi \|^{2k}_{\F L^1(\T)} \|\phi\|_{\F L^\infty (\T)}.
\label{YW8}
 \end{align}

\noi
Note that 
\eqref{YW8}
follows from \eqref{YW7}, 
\eqref{YW6}, 
and Young's inequality
with \eqref{YW1} and \eqref{YW2}.
The proof of  \eqref{error2} follows in a similar manner
with \eqref{phi5a} and  \eqref{phi5b}.
\end{proof}

\noi
This completes the proof of Theorem \ref{THM:1}
in the critical case.

\section{On norm inflation for the cubic fractional NLS}

In this section, we present the proof of Theorem \ref{THM:2}
for the fractional NLS \eqref{HNLS1}.

\subsection{Supercritical case}
In this case, the following cubic fractional NLS with small dispersion: 
\begin{align}
i \dt v +   \dl^{2\al}(-  \dx^2)^\al v +  |v|^{2}v  = 0
\label{HNLS2}
\end{align}

\noi
plays an important role.
We state a variant of Lemma \ref{LEM:approx}.

\begin{lemma} \label{LEM:approx2}

Given $\phi \in C^\infty_c(\R)$, 
suppose that 
$L$ is sufficiently large such that 
$\supp \phi \subset \T_L$.
Let $w$ and $v = v(\dl)$ 
be the solutions to \eqref{NLS2} and \eqref{HNLS2} on $\T_L$
with $w|_{t = 0} = v|_{t=0} = \phi$, respectively.
Then, given any $\be \in (0, 2\al)$, there exist $c>0$,  $C>0$,  $\dl_0 > 0$, and $L_0 \geq 1$ such that 
\begin{align*}
\| v (t)- w (t) \|_{ H^1 (\T_L)} \leq C \dl^\be
\end{align*}	

\noi
for all $|t| \leq c |\log \dl |^c$,  all $ 0 < \dl \leq \dl_0$, 
and all  periods $L \geq L_0$.

\end{lemma}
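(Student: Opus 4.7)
My plan is to adapt the proof of Lemma \ref{LEM:approx} essentially verbatim, replacing the dispersion $-\dl^2 \dx^2$ by $\dl^{2\al}(-\dx^2)^\al$. The only structural change is that the forcing term in the equation for $z := v - w$ carries a prefactor of $\dl^{2\al}$ rather than $\dl^2$, which is precisely why the admissible range in the conclusion becomes $\be \in (0, 2\al)$ instead of $\be \in (0, 2)$.

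I would first derive
\begin{align*}
\begin{cases}
i\dt z + \dl^{2\al}(-\dx^2)^\al z = - \dl^{2\al}(-\dx^2)^\al w - \NN(w, z), \\
z|_{t = 0} = 0,
\end{cases}
\end{align*}
with $\NN(w, z) = |w + z|^2(w + z) - |w|^2 w$ as before. Setting up the bootstrap hypothesis $\|z(t)\|_{H^1(\T_L)} \les 1$ on a small time interval around $0$, I would differentiate $\|z(t)\|_{H^1(\T_L)}$ in time. Since $(-\dx^2)^\al$ is self-adjoint on $H^1(\T_L)$, the dispersive term does not contribute to the energy estimate, and one obtains
\begin{align*}
\dt \|z(t)\|_{H^1(\T_L)} \leq \dl^{2\al}\|(-\dx^2)^\al w(t)\|_{H^1(\T_L)} + \|\NN(w, z)(t)\|_{H^1(\T_L)}.
\end{align*}
The nonlinear term is handled exactly as in \eqref{H6}, via the product rule, H\"older's inequality, the $L$-independent Sobolev embedding \eqref{G1}, the bootstrap hypothesis, and the explicit formula \eqref{NLS2a} for $w$, yielding a bound of the form $C(1 + |t|)^2 \|z(t)\|_{H^1(\T_L)} + C\|z(t)\|_{H^1(\T_L)}^3$.

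The main technical difference and the chief obstacle is the forcing term $\dl^{2\al}(-\dx^2)^\al w(t)$: since $(-\dx^2)^\al$ is nonlocal, the clean identity $\|w(t)\|_{H^k(\T_L)} = \|w(t)\|_{H^k(\R)}$ for integer $k$ used in \eqref{H5} is no longer available. I would handle this on the Fourier side: because $\supp w(t) = \supp \phi \subset \T_L$, the periodization of $w$ coincides with $w$ on $\T_L$, so \eqref{F5} applies and Plancherel gives
\begin{align*}
\|(-\dx^2)^\al w(t)\|_{H^1(\T_L)}^2 = \frac{1}{L}\sum_{n \in \Z} \big|\tfrac{n}{L}\big|^{4\al}\jb{\tfrac{n}{L}}^2 \big| \ft w \big(\tfrac{n}{L}, t\big)\big|^2,
\end{align*}
where $\ft w$ denotes the Fourier transform of $w$ on $\R$. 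This is a Riemann sum for $\int_\R |\xi|^{4\al}\jb{\xi}^2 |\ft w(\xi, t)|^2 d\xi \leq \|w(t)\|^2_{H^{1+2\al}(\R)}$. Since $\phi \in C^\infty_c(\R)$, $\ft w(\cdot, t)$ is Schwartz while $|\xi|^{4\al}$ is locally integrable at the origin (as $\al > 0$), so for $L \geq L_0$ large enough the Riemann sum is controlled by twice the integral, yielding $\|(-\dx^2)^\al w(t)\|_{H^1(\T_L)} \leq C(1 + |t|)^{1 + 2\al}$ uniformly in $L \geq L_0$, with growth in $t$ coming from differentiating \eqref{NLS2a}.

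Combining the two estimates produces
\begin{align*}
\dt \|z(t)\|_{H^1(\T_L)} \leq C\dl^{2\al}(1 + |t|)^{1 + 2\al} + C(1 + |t|)^3 \|z(t)\|_{H^1(\T_L)},
\end{align*}
so Gronwall's inequality with $z(0) = 0$ gives $\|z(t)\|_{H^1(\T_L)} \leq C\dl^{2\al}e^{C(1 + |t|)^4}$. For any fixed $\be \in (0, 2\al)$, this is at most $C\dl^\be$ provided $|t| \leq c|\log \dl|^c$ with a sufficiently small $c > 0$, which closes the bootstrap via the standard continuity argument and completes the proof. The only genuinely new ingredient beyond Lemma \ref{LEM:approx} is the $L$-uniform Riemann sum bound on $\|(-\dx^2)^\al w(t)\|_{H^1(\T_L)}$; once this is in place, the rest of the argument goes through unchanged.
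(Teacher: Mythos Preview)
Your proof is correct and follows the same approach the paper intends (the paper simply states that the proof is ``basically the same as that of Lemma~\ref{LEM:approx}'' and omits details). One simplification: the Riemann-sum argument for the forcing term is unnecessary, since $\|(-\dx^2)^\al w(t)\|_{H^1(\T_L)} \leq \|w(t)\|_{H^k(\T_L)} = \|w(t)\|_{H^k(\R)} \leq C(1+|t|)^k$ for any integer $k \geq 1+2\al$, using the compact support of $w(t)$ exactly as in \eqref{H4a}; this also sidesteps the question of whether the $L_0$ in your Riemann-sum bound can be chosen uniformly in $t$.
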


The proof of Lemma \ref{LEM:approx2} is basically the same as that of Lemma \ref{LEM:approx}
and hence we omit details.

We divide the supercritical case $s < s_\text{crit}^\al = \frac 12 - \al $
into the following three cases:
\[ \text{(i) } s \leq - \tfrac 12, \qquad
\text{(ii) } - \tfrac 12 < s < 0, 
\qquad \text{and}
\qquad 
\text{(iii) } s > 0.\]

\noi
Recall that there is no inflation when $s = 0$ due to the $L^2$-conservation.
When $s \leq - \frac 12$, 
the basic structure of the proof is the same as that of Theorem \ref{THM:1}.
When $s > -\frac 12$, however, 
we need to employ a more robust high-to-low energy transfer mechanism
for the ODE \eqref{NLS2}
as in Subsection \ref{SUBSEC:crit}.

\smallskip

\noi
$\bullet$ {\bf Case (i)} $s \leq - \frac 12$.
\quad 
In this case, we follow the argument presented in 
Subsection \ref{SUBSEC:super}.
Let $w$  be the solution to \eqref{NLS2} 
on $\R$ with $w|_{t = 0} = \phi$
considered  
in Subsection \ref{SUBSEC:super}.
See Appendix \ref{SEC:APP} 
for the construction of such $\phi$.
In particular, $w$ satisfies \eqref{X2}.
Given $j \in \N$, let $0 < \ld_j \ll \dl_j \ll1$.
With $L_j = \frac{\dl_j}{\ld_j}\gg 1$ as in \eqref{X2a}, 
let $w_j$ and $v_j$ be the solutions to \eqref{NLS2} 
and \eqref{HNLS2} with $\dl = \dl_j$ 
on $\T_{L_j}$
such that  $w_j|_{t = 0} = v_j |_{ t= 0} = \phi$.
By choosing $\dl_j \leq \dl_0$, 
Lemma \ref{LEM:approx2} yields
\begin{align}
\| v_j (t)- w_j (t) \|_{ H^1 (\T_{L_j})} \leq C \dl_j^\al
\label{Z2}
\end{align}	

\noi
for all $|t| \leq c |\log \dl_j |^c$.
Now, define $u_j$ by 
\begin{equation}
u_j (x, t) =  \tfrac 1{\ld_j^\al}  v_j\big(\tfrac{\dl_j x }{\ld_j}, \tfrac{t}{\ld_j^{2\al}} \big).
\label{Z3}
\end{equation}

\noi
Then, $u_j$ is a solution to \eqref{HNLS1} on $\T$.
From \eqref{F1}, we have 
\begin{align}
 \ft u_j(n, \ld_j ^{2\al} t)
 = \tfrac{1}{\ld_j^\al} \ft v_j\big(\tfrac{n }{L_j}  ,t \big).
\label{Z3b}
 \end{align}

\noi
As before,  we have
\begin{align}
 \ft u_j(0, 0)
& = \tfrac{1}{\ld_j^\al} \ft v_j(0, 0)
= \tfrac{1}{\ld_j^\al L_j} \F_{\R}( \phi)(0) = 0.
\label{Z3a}
\end{align}

\noi
From \eqref{F3} and \eqref{Z3b}, 
 we have 
\begin{align}
\| u_j(t)\|_{\dot H^s(\T)}
= \ld_j^{-s +\frac{1}{2} - \al} \dl_j^{s - \frac 12}
\big\|  v_j\big(\tfrac{t}{\ld_j^{2\al}} \big)\big\|_{\dot H^s(\T_{L_j})}.
\label{Z4}
\end{align}

\noi
In particular, 
in view of 
\eqref{Z3a}, 
\eqref{Z4},  and 
\eqref{F6} with $v_j(0) = \phi$,
we can choose 
$0 < \ld_j \ll \dl_j \ll 1$
such that
\begin{align}
\| u_j(0)\|_{ H^s(\T)}
& \leq 
\| u_j(0)\|_{\dot H^s(\T)} 
\sim \ld_j^{-s +\frac{1}{2}- \al } \dl_j^{s - \frac 12}
\| \phi\|_{\dot H^s(\R)}
\ll \frac{1}{j}, 
\label{Z5}
\end{align}

\noi
since $s< s_\text{crit}^\al = \frac 12 - \al$.  
In particular, we can  choose 
$0 < \ld_j \ll \dl_j \ll 1$ such that 
\begin{align}
\ld_j^{-s +\frac{1}{2} - \al } \dl_j^{s - \frac 12}
\sim 
\begin{cases}
\dl_j^\theta\ll \frac{1}{j}, & \text{if } s < -\frac 12, \\
\big\{\log (C_1 \dl_j^{-2\al})\big\}^{-\frac{1}{4}}\ll \frac{1}{j}, & \text{if } s =  -\frac 12, \\
\end{cases}
\label{Z5a}
\end{align}

\noi
for some small $\theta > 0$ with $s < -\frac 12 - \frac \theta {2\al}$
in the first case, 
where $C_1 > 0$ is to be specified later.

Let $\G_j$ be as in \eqref{X7a}.
By repeating the computation in \eqref{X7b2} with \eqref{Z2}, 
we obtain
\begin{align}
\#\G_j\les L_j \dl_j^{2\al}.
\label{Z5b}
\end{align}

\noi
Using  \eqref{Z5b}, 
we proceed 
 as in \eqref{X8},
 where the domain of the integration is 
 replaced by $\{ c_0 \dl_j^{2\al} \leq |\xi| \leq C_0\}$.
 Then, setting   $C_1 := c_0^{-1} C_0$, 
we have
\begin{align}
\|  v_j(1)\|_{\dot H^s(\T_{L_j})}
\ges 
\begin{cases}
\dl_j^{\al(2s+ 1)}\gg 1, & \text{if }s < -\frac 12,\\
\big\{\log (C_1 \dl_j^{-2\al})\big\}^\frac{1}{2} \gg 1,  & \text{if } s = -\frac 12.
\end{cases}
\label{Z6}
\end{align}

\noi
Finally,  proceeding as in \eqref{X10}
with 
\eqref{Z3b}, 
\eqref{X2a}, 
 \eqref{Z5a},  and \eqref{Z6}, we obtain 
\begin{align}
\| u_j (\ld_j^{2\al})\|_{H^{s}(\T)}
& = \ld_j^{-\al}\bigg( \sum_{n \in \Z} (1 + |n|^2)^{s} 
\big|\ft v_j\big(\tfrac{n}{L_j}, 1\big)\big|^2 \bigg)^\frac{1}{2}\notag \\
& \geq  \ld_j^{-\al}
L_j^{s - \frac{1}{2}}
\cdot
L_j^\frac{1}{2}\bigg( \sum_{n \ne 0} \Big( \tfrac{1}{L_j^2} + \big|\tfrac{n}{L_j}\big|^2\Big)^{s} 
\big|\ft v_j\big(\tfrac{n}{L_j}, 1\big)\big|^2 \bigg)^\frac{1}{2}\notag \\
& \sim   \ld_j^{ -s + \frac 12 - \al } \dl_j^{s - \frac 12} \| v(1 ) \|_{\dot H^{s}(\T_{L_j})}
\notag\\
& \ges 
\begin{cases}
\dl_j^\theta\cdot \dl_j^{\al(2s+ 1)} \gg j,  & \text{if }s < -\frac 12,\\
\big\{\log (C_1 \dl_j^{-2\al})\big\}^\frac{1}{4} \gg j, 
& \text{if } s = -\frac 12, 
\end{cases}
\label{Z7}
\end{align}

\noi
by choosing $\dl_j > 0$ sufficiently small.
We can also choose 
$\ld_j > 0$ sufficiently small such that 
  $t_j = \ld_j^{2\al} < \frac{1}{j}$.
Hence, Theorem \ref{THM:2} follows from \eqref{Z5} and \eqref{Z7}
in this case.

\medskip

\noi
$\bullet$ {\bf Case (ii)} $- \frac 12 < s < 0$.
\quad 
Note that the high-to-low energy transfer 
manifested in \eqref{X2} is not sufficient
to show a norm inflation
when $s > -\frac 12$.
Hence, we first need to construct another solution to the ODE \eqref{NLS2},
exhibiting a more robust 
high-to-low energy transfer.

Given $N \gg 1$, define a function $\psi_N$ on $\R$ by setting
\begin{align}
\ft \psi_N(\xi) =R\big\{ \ind_{N + Q_A} (\xi) + \ind_{2N + Q_A} (\xi)\big\},  
\label{psi1}
\end{align}

\noi
where $R = R(N)$ and $A  = A(N)$ are given by 
\begin{align*}
R = R(N) = \frac{1}{N^s \log N}
\qquad \text{and}
\qquad A = A(N) = \log N.
\end{align*}

\noi
From \eqref{psi1}, we have
\begin{align}
\| \psi_N \|_{H^s(\R)} \sim(\log N)^{-\frac{1}{2}}.
\label{psi3}
\end{align}

We first state 
a norm inflation for the ODE \eqref{NLS2}
posed on $\R$
when $ - \frac 12 < s < 0$.

\begin{lemma}\label{LEM:growth2}
Let $- \frac 12 < s  < 0$.
Given $N \gg1$, let $w_N$ be 
the global solution to \eqref{NLS2} posed on $\R$
with  $w_N|_{t = 0} = \psi_N$.
Then, we have 
\begin{align*}
\|  w_N (T_N)\|_{H^s(\R)} \ges N^{-s}(\log N)^{-\frac 32 + s}, 
\end{align*}

\noi
where $T_N$ is defined by 
\begin{align}
T_N = \frac{N^{2s}}{\log N}.
\label{ptime2}
\end{align}

\end{lemma}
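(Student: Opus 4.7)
The plan is to adapt the proof of Lemma \ref{LEM:growth} from Appendix \ref{SEC:APP2} to the real line and to the regularity range $-\frac12<s<0$. Using the explicit solution formula \eqref{NLS2a}, namely $w_N(x,t)=\psi_N(x)\,e^{it|\psi_N(x)|^2}$, I would expand as a power series in $t$:
\[
w_N(t)=\sum_{k=0}^{\infty}\Xi_k(t),\qquad \Xi_k(t):=\frac{(it)^k}{k!}|\psi_N|^{2k}\psi_N,
\]
and restrict attention to the low-frequency region $\{|\xi|\le c_0 A\}$. On this region, $\Xi_0=\psi_N$ contributes nothing since $\widehat{\psi}_N$ is supported in $\{|\xi|\ge N-A/2\}$, which is disjoint from $\{|\xi|\le c_0 A\}$ once $N$ is large. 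The goal is to extract the claimed lower bound from $\Xi_1(T_N)=iT_N|\psi_N|^2\psi_N$ and to show that the tail $\sum_{k\ge 2}\Xi_k$ is a factor $(\log N)^{-1}$ smaller.

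The central computation is to evaluate $\widehat{|\psi_N|^2\psi_N}$ at low frequencies via the convolution identity $\widehat{|\psi_N|^2\psi_N}=\widehat{\psi}_N * \widehat{\psi}_N(-\cdot) * \widehat{\psi}_N$ (using that $\widehat{\psi}_N$ is real). Since $\widehat{\psi}_N$ is supported in $(N+Q_A)\cup(2N+Q_A)$, a case analysis of all eight sign patterns in $\xi_1+\xi_2+\xi_3$ shows that the only configuration producing $|\xi|\lesssim A$ is the resonant one with $\xi_1,\xi_3\in N+Q_A$ and $-\xi_2\in 2N+Q_A$, i.e.\ the cancellation $N+N-2N=0$. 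Parametrizing and computing the area of the admissible region in $Q_A\times Q_A$ gives
\[
|\widehat{|\psi_N|^2\psi_N}(\xi)|\gtrsim R^3 A^2\qquad\text{for }|\xi|\le c_0 A.
\]
Since $\int_{|\xi|\le c_0 A}(1+\xi^2)^{s}d\xi\sim A^{2s+1}$ (where positivity of the exponent $2s+1$ uses $s>-\tfrac12$), this yields
\[
\|\mathbf{P}_{|\xi|\le c_0 A}\Xi_1(T_N)\|_{H^s(\R)}\gtrsim T_N R^3 A^{5/2+s}.
\]
Plugging in $R=1/(N^s\log N)$, $A=\log N$, and $T_N=N^{2s}/\log N$ gives exactly $N^{-s}(\log N)^{-3/2+s}$.

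For the tail, Young's inequality on the Fourier side gives $\|\widehat{|\psi_N|^{2k}\psi_N}\|_{L^\infty}\le \|\widehat{\psi}_N\|_{L^\infty}\|\widehat{\psi}_N\|_{L^1}^{2k}\lesssim R^{2k+1}A^{2k}$, so the $k$-th term contributes at most $T_N R^3 A^{5/2+s}\cdot(T_N R^2 A^2)^{k-1}/k!$ to the $H^s$-norm on $\{|\xi|\le c_0 A\}$. Since the stated choices give $T_N R^2 A^2=(\log N)^{-1}$, summing over $k\ge 2$ costs at most a factor $(\log N)^{-1}$ relative to the $k=1$ term and is absorbed. The main obstacle is the delicate logarithmic bookkeeping: the parameters $R$, $A$, $T_N$ must be tuned simultaneously so that (i) the initial data is small in $H^s(\R)$, with $\|\psi_N\|_{H^s(\R)}\sim(\log N)^{-1/2}$; (ii) $T_N R^2 A^2\ll 1$, so the power series converges and the higher-order terms are negligible; and (iii) the $k=1$ term attains the specific logarithmic power $(\log N)^{-3/2+s}$ claimed. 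Each exponent is only a modest power of $\log N$, so a slip in any one would destroy the scheme, but with the stated parameters the bookkeeping closes and the combinatorics parallels that of Lemma \ref{LEM:growth}.
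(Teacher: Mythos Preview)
Your proposal is correct and follows essentially the same route as the paper's proof in Appendix~\ref{SEC:APP2}: expand $w_N(t)$ as the power series $\sum_k \Xi_k(t)$, extract the main contribution from $\Xi_1$ via the convolution lower bound $|\widehat{\Xi_1}(t)(\xi)|\gtrsim tR^3A^2$ on $Q_A$ (Lemma~\ref{LEM:Hs3}), and control the tail $\sum_{k\ge2}\Xi_k$ using $T_NR^2A^2=(\log N)^{-1}\ll1$ together with the Young-inequality bound (Lemma~\ref{LEM:Hs2}). The only cosmetic difference is that you restrict throughout to $\{|\xi|\le c_0A\}$, which kills $\Xi_0$ for free, whereas the paper instead bounds $\|\Xi_0\|_{H^s}=\|\psi_N\|_{H^s}\sim(\log N)^{-1/2}$ directly; either way the arithmetic closes with the same logarithmic exponents.
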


The proof of Lemma \ref{LEM:growth2} 
is analogous to that of 
Lemma \ref{LEM:growth}
and is presented 
 in Appendix \ref{SEC:APP2}.

Given large $j \in \N$, 
choose $N = N(j) \gg 1$ such that 
\begin{align}
\frac{N^{2s}}{\log N}
 \ll \frac 1{(\log N)^{\frac{1}{2}}} \ll \frac{1}{j}
 \qquad \text{and}
 \qquad
  N^{-s}(\log N)^{-\frac 32 + s}
 \gg  j.
\label{psi5}
\end{align}

\noi
Given $s < 0$, fix $\kappa \in \N$ such that $s + \kappa \geq 0$
and let $\psi^\kappa_N = \dx^{-\kappa} \psi_N$.
More precisely, 
$\psi^\kappa_N$ is defined by 
$\ft{\psi^\kappa_N}(\xi)  = (2\pi i \xi)^{-\kappa} \ft {\psi_N}(\xi)$.
Given small $\eta > 0$ (to be chosen later), 
let $\phi^\kappa \in C^\infty_c(\R)$
such that 
\begin{align}
\|  \phi^\kappa - \psi_N^\kappa \|_{H^{\kappa+1}(\R)} <  \eta.
\label{Psi4}
\end{align}

\noi
Define $\phi$ by setting $\phi = \dx^\kappa \phi^\kappa$.
Then, from \eqref{Psi4} have 
\begin{align}
\|  \phi - \psi_N \|_{H^{1}(\R)} 
\leq 
\|  \phi^\kappa - \psi_N^\kappa \|_{H^{\kappa+1}(\R)} <  \eta.
\label{Psi5}
\end{align}

\noi
Recalling that $s + \kappa \geq  0$, 
it follows from  \eqref{Psi4} and \eqref{psi1} that
\begin{align}
\| \P_{\leq 1} \phi\|_{\dot H^s(\R)}
& = \| \P_{\leq 1} \phi^\kappa\|_{\dot H^{s+\kappa}(\R)}\notag\\
& \leq
\|  \phi^\kappa - \psi_N^\kappa \|_{H^{\kappa+1}(\R)} 
+ \| \P_{\leq 1} \psi_N^\kappa\|_{\dot H^{s+\kappa}(\R)}
<  \eta.
\label{Psi6}
\end{align}

\noi
Hence, from \eqref{F6}, \eqref{psi3}, \eqref{Psi5}, and \eqref{Psi6} with \eqref{psi5},  
we have
\begin{align}
\| \phi\|_{\dot H^s(\T_L)}
\sim
\| \phi\|_{\dot H^s(\R)}
& \les  (\log N)^{-\frac{1}{2}} + \eta \ll \tfrac{1}{j}, 
\label{Psi6a}
\end{align}

\noi
for all sufficiently large $L\gg1$, 
provided that  $\eta \ll \frac{1}{j}$.

Let $w$ be the solution to \eqref{NLS2} on $\R$
with $w |_{t = 0} = \phi$.
Then, by the continuity  of the solution map 
to \eqref{NLS2} (in the $H^1$-topology)
and \eqref{Psi5}, 
we can choose $\eta = \eta(N) > 0$ sufficiently small
to guarantee that 
\begin{align}
\|w(t) - w_N(t) \|_{H^1(\R)} \les 1
\label{psi8}
\end{align}

\noi
for all $t \in [0, T_N]$.
Then, from Lemma \ref{LEM:growth2}
with \eqref{psi5} and \eqref{psi8}, we have
\begin{align}
\| w (T_N)\|_{H^s(\R)} \gg j.
\label{psi9}
\end{align}

Now, recalling that $s < s_\text{crit} = \frac 12 - \al$, 
choose $0 < \ld_j \ll \dl_j \ll1$
such that 
\begin{align}
\ld_j^{-s +\frac{1}{2} - \al} \dl_j^{s - \frac 12} \sim 1
\label{psi10}
\end{align}

\noi
and $\supp \phi \subset \T_{L_j}$, where
$L_j = \frac{\dl_j}{\ld_j} \gg 1$ is as in \eqref{X2a}.
Moreover,  from \eqref{F7}
and \eqref{psi9}, we have  
\begin{align}
\| w (T_N)\|_{H^s(\T_{L_j})} \gg j
\label{psi11}
\end{align}

\noi
by choosing $L_j$ sufficiently large.
Let $v_j$ be the solution to \eqref{HNLS2} 
with $\dl = \dl_j$ on $\T_{L_j}$
such that $v_j|_{t = 0} = \phi$.
Then, 
it follows from 
Lemma \ref{LEM:approx2} applied  to $w$ and $v_j$ on $[0, T_N]$
with \eqref{ptime2}, \eqref{psi5}, 
and \eqref{psi11} that
\begin{align*}
\| v_j (T_N)\|_{H^s(\T_{L_j})} \gg j.
\end{align*}

\noi
This in particular implies
\begin{align}
\| v_j (T_N)\|_{\dot H^s(\T_{L_j})} \gg j
\qquad \text{or}
\qquad 
L_j^\frac{1}{2}|\ft v_j (0, T_N)| \gg j.
\label{psi12}
\end{align}

Now, define $u_j$ by \eqref{Z3}.
On the one hand,  it follows from 
\eqref{Z3a}, \eqref{Z4},  \eqref{Psi6a}, and \eqref{psi10}
that 
\begin{align}
\| u_j(0)\|_{ H^s(\T)}
 \leq 
\| u_j(0)\|_{\dot H^s(\T)}  
 =  \ld_j^{-s +\frac{1}{2}- \al } \dl_j^{s - \frac 12}\| \phi\|_{\dot H^s(\T_{L_j})}
\ll \frac{1}{j}.
\label{psi13}
\end{align}

\noi
On the other hand, 
from \eqref{Z3b}, \eqref{Z4}, \eqref{psi10}, 
and \eqref{psi12}, we obtain
\begin{align}
\| u_j (\ld_j^{2\al} T_N)\|_{H^{s}(\T)}
& \sim  \| u_j (\ld_j^{2\al} T_N)\|_{\dot H^{s}(\T)}
+ |\ft u_j(0, \ld_j^{2\al} T_N)|\notag\\
& = \ld_j^{-s +\frac{1}{2}- \al } \dl_j^{s - \frac 12}\| v_j(T_N) \|_{\dot H^s(\T_{L_j})}
+  \tfrac{1}{\ld_j^\al} |\ft v_j(0 ,T_N)|\notag\\
& \sim  \| v_j(T_N) \|_{\dot H^s(\T_{L_j})}
+   L_j^{\frac{1}{2}-s}| \ft v_j(0 ,T_N)|\notag\\
& \gg j.
\label{psi14}
\end{align}

\noi
Note that we have
$t_j = \ld_j^{2\al} T_N \ll \frac 1j$ in view of 
 \eqref{ptime2} and \eqref{psi5}.
Hence, Theorem \ref{THM:2} follows from \eqref{psi13} and \eqref{psi14}
in this case.

\medskip

\noi
$\bullet$ {\bf Case (iii)} 
Lastly, we consider the case $0 < s < \frac 12 - \al$.
Fix a non-constant mean-zero function $\phi$ with a compact support on $\R$
and let $w$ be the solution to \eqref{NLS2} on $\R$
with $w |_{t = 0} = \phi$.
From  a direct calculation,  we have
\begin{align*}
\|  w(t)\|_{\dot H^k(\T_{L})}= 
\|  w(t)\|_{\dot H^k(\R)}\sim (1+ t)^k.
\end{align*}

\noi
for all $k \in \N \cup\{0\}$ and all sufficiently large $L\gg 1$.
Then, by interpolation, we have
\begin{align}
\|  w(t)\|_{\dot H^\s(\T_{L})}\sim (1+t)^\s
\label{D1a}
\end{align}

\noi
for all $\s\geq 0$.

Given $j \in \N$, 
choose $0 < \ld_j \ll \dl_j \ll 1$
such that 
\eqref{D1a} holds for $\s = s$ and $ L = L_j
 : = \frac{\dl_j}{\ld_j}\gg 1$.
Let $v_j$ be the solution to \eqref{HNLS2} with $\dl = \dl_j$ on $\T_{L_j}$
such that  $v_j |_{t = 0} = \phi$.
Then, by 
Lemma \ref{LEM:approx2} and \eqref{D1a}, there exists $c_0 > 0$ such that 
\begin{align}
\|  v_j(t)\|_{\dot H^s(\T_{L_j})}\sim (1+t)^s.
\label{D2}
\end{align}

\noi
\noi
for $ |t| \leq c_0 |\log \dl_j|^{c_0}$, provided 
$ \dl_j \ll 1$ and $L_j \gg 1$.
In the following, we further impose that 
\begin{align}
\ld_j^{-s +\frac{1}{2} - \al} \dl_j^{s - \frac 12}
\sim  |\log \dl_j|^{- \frac 12c_0s}.
\label{D3}
\end{align}

\noi
Then, 
it follows from 
\eqref{Z3a}, \eqref{Z4},  \eqref{D3}, 
and 
\eqref{F6} that 
\begin{align}
\| u_j(0)\|_{ H^s(\T)}
& \les 
\| u_j(0)\|_{\dot H^s(\T)} 
\sim 
|\log \dl_j|^{- \frac 12c_0s}
\| \phi\|_{\dot H^s(\R)}
\ll \frac{1}{j}
\label{D4}
\end{align}

\noi
for sufficiently small $\dl_j >0$
and $L_j \gg 1$.
Letting $\wt t_j = c_0 |\log \dl_j|^{c_0}$, 
it follows 
from \eqref{Z4}, \eqref{D2}, \eqref{D3}
that 
\begin{align}
\| u_j(\ld_j^{2\al} \wt t_j)\|_{ H^s(\T)}
\geq 
\| u_j(\ld_j^{2\al} \wt t_j)\|_{\dot H^s(\T)}
\sim 
  |\log \dl_j|^{ \frac 12c_0s}\gg j
\label{D5}
\end{align}

\noi
\noi
for sufficiently small $\dl_j >0$.
From \eqref{D3}, we have
 $\ld_j \les \dl_j^\beta$ for some $\beta > 0$.
Hence, by choosing $\dl_j >0 $ sufficiently small, we have
\begin{align}
t_j : = \ld_j^{2\al} \wt t_j
 \les  \dl_j^{2 \al \be}   \cdot 
 c_0 |\log \dl_j|^{c_0}\ll \frac 1j.
\label{D6}
\end{align}

\noi
Therefore, Theorem \ref{THM:2} follows from \eqref{D4}, \eqref{D5},  and \eqref{D6}
in this case.

\subsection{Critical case}  \label{SUBSEC:crit2}
Next, we consider the critical case $s  = s_\text{crit}^\al = \frac 12 - \al \leq -\frac 12 $.
In view of Theorem \ref{THM:1}, 
we assume that $s < -\frac 12$.
The argument 
follows closely 
the presentation  in Subsection \ref{SUBSEC:crit}
with Lemma \ref{LEM:growth3} below
replacing Lemma \ref{LEM:growth}.

Given $N \gg 1$, define a periodic function $\phi_N$ on $\T$ by setting
\begin{align}
\ft \phi_N(n) = R\big\{ \ind_{N + Q_A} (n) + \ind_{2N + Q_A} (n)\big\}, 
\label{C1}
\end{align}

\noi
where $R = R(N)$ and $A  = A(N)$ are given by 
\begin{align}
R = R(N) = N^{-\frac 12 - s} \qquad \text{and}\qquad
A = A(N) = N^{1 - \theta}, 
\label{C1a}
\end{align}

\noi
with sufficiently small $\theta > 0$
such that 
\begin{align*}
s < -\tfrac 12 - 3 \theta.
\end{align*}

\noi
From \eqref{C1} and \eqref{C1a}, we have 
\begin{align}
  \| \phi_N \|_{H^{\frac 12 - \al}(\T)} 
\sim N^{- \frac \theta 2 }.
\label{C2a}
\end{align}

Let $w^N$ be 
the  global solution to \eqref{NLS2} posed on $\T$
such that  $w^N|_{t = 0} = \phi_N$.
Proceeding as in Subsection \ref{SUBSEC:crit}, 
we have
\begin{align}
\| w^N (t)\|_{\F L^{1}(\T)} & \les  \| \phi_N \|_{\F L^1(\T)} 
\sim N^{\frac 12 - s - \theta}
\label{C3a},\\
\| w^N (t)\|_{\F L^{\infty}(\T)} & \les  \| \phi_N \|_{\F L^\infty(\T)} 
\sim N^{-\frac 12 - s}
\label{C3b}
\end{align}
	
\noi
for 
 all $t \in [0, T^*_N]$, 
 where $T^*_N$ is given by 
 \begin{align}
T^*_N \sim  \|\phi_N \|_{\F L^1(\T)}^{-2} \sim 
N^{2 s - 1 + 2\theta}.
\label{Ctime1}
\end{align}

The following lemma is a variant of Lemma \ref{LEM:growth} for $s < -\frac 12$.
See Appendix \ref{SEC:APP2}
for the proof.

\begin{lemma}\label{LEM:growth3}
Let $ s< -\frac 12 $. 
Given $N \gg1$, define $T_N > 0$ by 
\begin{align}
T_N =N^{2 s - 1 - \theta}.
\label{Ctime2}
\end{align}
	
\noi
Then, we have 
\begin{align*}
\| \P_{<N} w^N (T_N)\|_{H^{\frac{1}{2}-\al}(\T)} \ges N^{-\frac 12 - s - 3\theta}. 
\end{align*}

\end{lemma}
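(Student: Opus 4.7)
The plan is to mirror the proof of Lemma \ref{LEM:growth} (given in Appendix \ref{SEC:APP2}) with the new choices of $R$, $A$, and $T_N$ from \eqref{C1a}--\eqref{Ctime2}. Starting from the explicit ODE formula \eqref{phi3}, expand
\begin{align*}
w^N(t) = \sum_{k=0}^{\infty} \Xi_k(t), \qquad \Xi_k(t) := \frac{(it)^k}{k!} |\phi_N|^{2k} \phi_N.
\end{align*}
It then suffices to establish a lower bound for $\|\P_{<N} \Xi_1(T_N)\|_{H^{s}(\T)}$ (with $s = \tfrac 12 - \al$), and matching strictly smaller upper bounds for $\|\P_{<N} \Xi_0\|_{H^s(\T)}$ and for $\sum_{k \geq 2} \|\P_{<N} \Xi_k(T_N)\|_{H^s(\T)}$.

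For the main term, I would analyze the low-frequency Fourier coefficients of $|\phi_N|^2\phi_N$ via
\begin{align*}
\widehat{|\phi_N|^2\phi_N}(n) = \sum_{n = n_1 - n_2 + n_3} \widehat{\phi_N}(n_1)\,\overline{\widehat{\phi_N}(n_2)}\,\widehat{\phi_N}(n_3).
\end{align*}
Since $\widehat{\phi_N}$ is supported in $(N + Q_A)\cup (2N + Q_A)$, a quick case analysis shows that the only configuration producing small $|n|$ is the resonant one $n_1, n_3 \in N + Q_A$ and $n_2 \in 2N + Q_A$; all other combinations force $|n| \gtrsim N$. Writing $n_i = N_i + a_i$ with $a_i \in Q_A$, the constraint becomes $a_1 - a_2 + a_3 = n$, which for $|n| \les A$ admits approximately $A^2$ solutions, each contributing $R^3$. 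This gives
\begin{align*}
|\widehat{|\phi_N|^2\phi_N}(n)| \ges A^2 R^3 \sim N^{\frac 12 - 3s - 2\theta} \qquad \text{for } |n| \leq A/3.
\end{align*}
Multiplying by $T_N$ and restricting the $H^s$-sum to these low modes, together with the fact that $\sum_{|n| \leq A/3} \jb{n}^{2s}$ is uniformly bounded below for $s < -\tfrac 12$, yields
\begin{align*}
\|\P_{<N} \Xi_1(T_N)\|_{H^s(\T)} \ges T_N \cdot N^{\frac 12 - 3s - 2\theta} = N^{-\frac 12 - s - 3\theta}.
\end{align*}

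For the error terms I would use the elementary inequality $\|\P_{<N} f\|_{H^s(\T)} \les \|f\|_{\F L^\infty(\T)}$, valid for $s < -\tfrac 12$ by Cauchy--Schwarz, combined with the Young-type bound $\|\Xi_k\|_{\F L^\infty} \leq \tfrac{t^k}{k!}\|\phi_N\|_{\F L^1}^{2k}\|\phi_N\|_{\F L^\infty}$. A direct computation shows that $T_N \|\phi_N\|_{\F L^1}^2 \sim N^{-3\theta} \ll 1$ at $t = T_N$, so summing the geometric tail delivers
\begin{align*}
\sum_{k \geq 2} \|\P_{<N} \Xi_k(T_N)\|_{H^s(\T)} \les N^{-\frac 12 - s - 6\theta}.
\end{align*}
The zeroth term is handled directly: $\P_{<N}\phi_N$ is Fourier-supported in $[N - A/2, N-1]$, so a one-line calculation gives $\|\P_{<N}\phi_N\|_{H^s(\T)} \sim A^{1/2} R \cdot N^s \sim N^{-\theta/2}$.

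The only real subtlety is exponent bookkeeping. Since by assumption $s < -\tfrac 12 - 3\theta$, one has $-\tfrac 12 - s - 3\theta > 0 > -\tfrac{\theta}{2}$ and trivially $-\tfrac 12 - s - 3\theta > -\tfrac 12 - s - 6\theta$, so both error contributions are strictly smaller than the main term and a triangle inequality closes the argument. Every individual step parallels its counterpart in Lemma \ref{LEM:growth}; the genuine novelty is only the calibration of $R$, $A$, and $T_N$ so that the resonant Picard contribution survives at the critical regularity $s = \tfrac 12 - \al < -\tfrac 12$.
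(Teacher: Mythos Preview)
Your proposal is correct and follows essentially the same approach as the paper: expand $w^N$ in the power series $\sum_k \Xi_k$, extract the main contribution from $\P_{<N}\Xi_1(T_N)$ via the resonant convolution $(n_1,n_2,n_3)\in (N+Q_A)\times(2N+Q_A)\times(N+Q_A)$ (this is exactly Lemma~\ref{LEM:Hs3}), and control the remaining terms by $\|\Xi_k\|_{\F L^\infty}\le \tfrac{t^k}{k!}\|\phi_N\|_{\F L^1}^{2k}\|\phi_N\|_{\F L^\infty}$ together with $T_N(RA)^2\sim N^{-3\theta}\ll1$ (this is the $s<-\tfrac12$ case of Lemma~\ref{LEM:Hs2} with $f(A)=1$). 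Your exponent bookkeeping and the use of $s<-\tfrac12-3\theta$ to separate the main term from the errors match the paper's computation \eqref{B6}--\eqref{B7} exactly.
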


Let $u^N$ be  the solution to \eqref{HNLS1} with $u^N|_{t = 0} = \phi_N$.
Then, 
let  $\u^N$ be 
the interaction representation of $u^N$
defined by $\u^N (t)=  e^{it(-\partial_x^2)^{\al}} u^N(t)$.
From the Duhamel formulation of $u^N$,
 we have
\begin{align}
\u^N (t) 
& =  \phi_N + i \int_0^t \sum_{n\in \Z} e^{inx} \notag \\
& \hphantom{XXXXX}
\times\sum_{\Gamma(n)} e^{- i\Phi_\al(\bar n)t'} 
\ft{\u^N}(n_1, t') \cj{\ft{\u^N}(n_2, t')} \ft{\u^N}(n_3, t')dt', 
\label{ZW1}
\end{align}

\noi
where $\Phi_\al(\bar n) = |n|^{2\al}-|n_1|^{2\al}+|n_2|^{2\al}-|n_3|^{2\al}$. 
Then, as in Subsection \ref{SUBSEC:crit}, we have
\eqref{YW1} and \eqref{YW2}
for all $t\in [0, T^*_N]$, where $T^*_N$ is as in \eqref{Ctime1}.
Moreover, the approximation lemma (Lemma \ref{LEM:YW})
also holds with $w^N$, $\u^N$, and $T^*_N$
in our current context.

%
%
%
%
%


Given  $j \in \N$, fix  $N = N(j) \gg 1$
such that 
\begin{align}
N^{2 s - 1 - \theta}
\ll
N^{- \frac \theta 2 } \ll \tfrac 1j 
\qquad\text{and}\qquad
N^{-\frac 12 - s - 3\theta}\gg j.
\label{C4a}
\end{align}

\noi
Let   $T_N = N^{-2\al - \theta}$  as in \eqref{Ctime2}.
Then,  by the mean value theorem, we have 
\begin{align}
\label{ZW3}
\sup_{t \in [0, T_N]}\bigg|\int_0^t (1 -  e^{- i\Phi(\bar n)t'} ) dt'\bigg|
\leq T_N^2 |\Phi_\al(\bar n)| \les T_N^2 N^{2\al},
\end{align}

\noi
provided that 
$|n|, |n_j|\les N$, $j = 1, 2, 3$.
Then, by Lemma \ref{LEM:YW} and \eqref{ZW3}
with \eqref{C3a} and  \eqref{C3b}, 
 we have 
\begin{align}
\|w^N- & \u^N\|_{L^\infty_{T_N} \F L^\infty}\notag\\
& \les   T_N^2  N^{2\al} \| \phi_N \|^{2}_{\F L^1(\T)} \|\phi_N\|_{\F L^\infty (\T)} 
+ \sum_{k=2}^4 T_N^k \| \phi_N \|^{2k}_{\F L^1(\T)} \|\phi_N\|_{\F L^\infty (\T)}\notag \\
& \les  N^{-\frac{1}{2} - s - 4\theta}.
\label{ZW3a}
\end{align}

\noi
In particular, we have
\begin{align}
\|\P_{<N} \big(  w^N(T_N)  - \u^N(T_N)\big)\|_{H^{s}(\T)} 
&   \les  \|w^N(T_N)-\u^N(T_N) \|_{ \F L^\infty(\T)} \notag \\
&  \les N^{-\frac{1}{2} - s - 4\theta}.
\label{ZW4}
\end{align}

\noi
Then, from Lemma \ref{LEM:growth3}
and \eqref{ZW4}, we conclude that 
\begin{align}
\| u^N (T_N)\|_{H^{s}(\T)}  & = \| \u^N (T_N)\|_{H^{s}(\T)} \notag\\
&  \geq  \| \P_{<N} \u^N (T_N)\|_{H^{s}(\T)} 
\ges 
N^{-\frac{1}{2} - s - 3\theta}.
\label{ZW5}
\end{align}

\noi
Therefore, 
Theorem \ref{THM:2} in the critical case
 follows from \eqref{C2a} and  \eqref{ZW5} with \eqref{Ctime2} and \eqref{C4a}.

\begin{remark}\rm
It follows from the proof of Lemma \ref{LEM:growth3} (see Lemma \ref{LEM:Hs3} below) that
\begin{align}
\| \P_{<N}w^N (T_N)\|_{H^{s}(\T)} \ges 
T_N \| \phi_N \|^{2}_{\F L^1(\T)} \|\phi_N\|_{\F L^\infty (\T)} 
\sim N^{-\frac 12 - s - 3\theta}
\label{ZW3b}
\end{align} 

\noi
and $T_N N^{2\al} \sim N^{-\theta}$.
Comparing \eqref{ZW3a} and \eqref{ZW3b}, we see that it is essential
to have $T_N N^{2\al} \les 1$ for our argument to work.
\end{remark}

\begin{remark}\label{REM:fail1} \rm
The proof of Theorem \ref{THM:2}
in the critical case with $s < -\frac 12$ is based on 
Lemma \ref{LEM:growth3}, exploiting a high-to-low energy transfer analogous
to Lemmas \ref{LEM:growth} and \ref{LEM:growth2}
along with the approximation lemma (Lemma \ref{LEM:YW}). 
When $-\frac 12 < s < 0$, one may hope to exploit 
a similar high-to-low energy transfer. 
There are,  however, no possible values of $R$ and $A$
for  an initial condition $\phi_N$ of the form \eqref{C1}, 
guaranteeing norm inflation at time $T_N \les N^{-2\al}$.
See Remark \ref{REM:fail2}.
\end{remark}

\appendix

\section{High-to-low energy transfer for the ODE: Part 1}
\label{SEC:APP}

Let $s \leq - \frac 12$.
In the following, we discuss the construction of the function  
 $\phi \in C^\infty_c(\R)\cap \dot H^s(\R) $ described in 
Subsection \ref{SUBSEC:super}, 
satisfying  
(i) $\ft \phi(\xi) = O_{\xi \to 0}(|\xi|^\kappa)$ for some $\kappa > - s - \frac 12$
and (ii) 
there exist $t_0 > 0$ and $c > 0$ such that 
\begin{align}
\bigg| \int_\R w(x, t_0) dx \bigg| \geq c, 
\label{A1}
\end{align}

\noi
where  $w$ is the solution \eqref{NLS2} on $\R$
with $w|_{t = 0} = \phi$.
In view of  \eqref{NLS2a}, 
we can rewrite \eqref{A1} as 
\begin{align}
\bigg| \int_\R \phi(x) e^{  i |\phi(x)|^2 t_0}  dx \bigg| \geq c. 
\label{A1a}
\end{align}

\noi
The main addition from \cite{CCT2b} is the compactness of the support of $\phi$,
which is crucial for our application in the periodic setting.

The following lemma states that the smoothness assumption on $\phi$ 
is inessential.

\begin{lemma} \label{LEM:const}
Let $s \leq - \frac {1}{2}$.
Suppose that there exists a function $\psi \in C_c^0(\R) \cap\dot H^s (\R)$
 such that 
\textup{(i)} $\ft \psi(\xi) = O_{\xi \to 0}(|\xi|^\kappa)$ for some $\kappa > - s - \frac 12$
and \textup{(ii)} 
there exist $t_0 > 0$ and $c > 0$ such that 
\begin{align}
\bigg| \int_\R \psi(x) e^{  i |\psi(x)|^2 t_0}  dx \bigg| \geq c. 
\label{A2}
\end{align}

\noi
Then, there exists  $\phi \in C^\infty_c(\R)\cap \dot H^s(\R) $
satisfying both the conditions \textup{(i)} and \textup{(ii)}
with $c$ replaced by $\frac c2$.

\end{lemma}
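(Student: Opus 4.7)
The plan is to produce $\phi$ from $\psi$ by a standard mollification and then verify that the Fourier decay at the origin, the compactness of support, and the integral lower bound are all preserved up to an arbitrarily small loss.

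Fix $\rho \in C^\infty_c(\R)$ with $\int_\R \rho\, dx = 1$ and $\supp \rho \subset [-1, 1]$, set $\rho_\eps(x) := \eps^{-1} \rho(x/\eps)$, and define
\[
\phi_\eps := \psi * \rho_\eps.
\]
Then $\phi_\eps \in C^\infty_c(\R)$, and for all $0 < \eps \leq 1$ the supports $\supp \phi_\eps$ lie in a common compact neighborhood of $\supp \psi$. On the Fourier side, $\ft{\phi_\eps}(\xi) = \ft\psi(\xi)\,\ft{\rho}(\eps \xi)$; since $\ft\rho \in C^\infty(\R)$ with $\ft\rho(0) = 1$ and $\|\ft\rho\|_{L^\infty(\R)} \les 1$, the hypothesis $\ft\psi(\xi) = O_{\xi\to 0}(|\xi|^\kappa)$ is inherited by $\ft{\phi_\eps}$ with an implicit constant uniform in $\eps$. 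Combined with the assumption $\kappa > -s - \frac{1}{2}$, this shows $\phi_\eps \in \dot H^s(\R)$, verifying property (i).

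For property (ii), the continuity of $\psi$ together with the compactness of $\supp \psi$ ensures that $\phi_\eps \to \psi$ uniformly on $\R$ as $\eps \to 0$. Because $w \mapsto w e^{it_0 |w|^2}$ is continuous on $\C$, it follows that
\[
\phi_\eps(x)\, e^{i t_0 |\phi_\eps(x)|^2} \longrightarrow \psi(x)\, e^{i t_0 |\psi(x)|^2}
\]
uniformly in $x \in \R$, and since all integrands are supported in a common compact set, the corresponding integrals converge. In view of \eqref{A2}, we may therefore choose $\eps > 0$ small enough so that
\[
\bigg| \int_\R \phi_\eps(x)\, e^{i t_0 |\phi_\eps(x)|^2}\, dx \bigg| \geq \frac{c}{2}.
\]
Setting $\phi := \phi_\eps$ for such an $\eps$ and appealing to \eqref{NLS2a} yields \eqref{A1} (with constant $\frac c 2$) for the corresponding solution $w$ to \eqref{NLS2}, completing the construction.

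The only step that requires any care is the Fourier decay verification, because for $s \leq -\frac 12$ the $\dot H^s$-seminorm is sensitive to the behavior of $\ft{\phi_\eps}$ near the origin; however, the uniform vanishing $\ft{\phi_\eps}(\xi) = O(|\xi|^\kappa)$ with the correct order $\kappa > -s - \frac 12$ is exactly what $\ft\rho(0) = 1$ and the hypothesis on $\ft\psi$ deliver. Everything else is routine approximation on a fixed compact set.
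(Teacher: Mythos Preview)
Your proof is correct and follows the same overall scheme as the paper---mollify $\psi$ and use uniform convergence on a fixed compact set to preserve (ii)---but your verification of condition (i) is more direct than the paper's. The paper invokes the Paley--Wiener theorem to upgrade $\kappa$ to an integer, translates the Fourier vanishing into moment conditions $\int x^j\psi\,dx=0$ for $j<\kappa$, checks by hand that these moments survive convolution, and then appeals to analyticity of $\ft{\phi_\eps}$ to recover (i). You bypass all of this by reading off $\ft{\phi_\eps}(\xi)=\ft\psi(\xi)\,\ft\rho(\eps\xi)$ and noting that multiplication by a bounded function with value $1$ at the origin preserves the $O(|\xi|^\kappa)$ decay. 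This is cleaner; the paper's detour through moments is perhaps motivated by the explicit constructions of $\psi_\kappa$ that follow in Appendix~A, but for the lemma itself your argument suffices. Note also that $\phi_\eps\in\dot H^s(\R)$ follows immediately from $|\ft{\phi_\eps}|\les|\ft\psi|$ and $\psi\in\dot H^s(\R)$, so you do not even need to invoke $\kappa>-s-\tfrac12$ at that point.
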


\begin{proof}
It follows from Paley-Wiener Theorem \cite[Theorem IX.12]{RS}
that 
$\ft \psi$ is an analytic function on $\R$.
Hence, the vanishing of $\ft \psi$ at 0  must occur at an integral order.
This allows us to assume that $\kappa \in \N$.
Then, from the Taylor expansion $\ft \psi(\xi) = \sum_{j = 0}^\infty \frac{\dd^j \ft \psi(0)}{j!} \xi^j$, 
we  have $\dd^j \ft \psi(0) = 0$ for $j = 0, 1, \dots, \kappa-1.$
In other words, we  have 
\begin{align}
\int_\R x^j \psi(x) dx = 0
\label{A3}
\end{align}

\noi
for $j = 0, 1, \dots, \kappa-1.$

Let $\eta \in C^\infty_c(\R)$ be a smooth bump function
with $\supp \eta \subset [-1, 1]$
and $\int \eta dx = 1$.
Given $\eps > 0$ (to be chosen later), 
let $\phi_\eps = \eta_\eps * \psi$,
where  $\eta_\eps (x) = \eps^{-1} \eta(\eps^{-1}x)$.
Then, we  claim that 
\begin{align}
\int_\R x^j \phi_\eps(x) dx = 0
\label{A4}
\end{align}

\noi
for $j = 0, 1, \dots, \kappa-1.$
Given $j \in \{0, 1, \dots, \kappa - 1\}$, write $x^j$ as 
\begin{align*}
x^j = \big((x-y) + y\big)^j = \sum_{k = 0}^j c_k(y) (x-y)^k.
\end{align*}

\noi
Then, by Fubini's theorem, we have
\begin{align*}
\int_\R x^j \phi_\eps(x) dx 
= 
\sum_{k = 0}^j 
\int\bigg(  \int  (x-y)^k \psi(x - y) dx \bigg)
c_k(y) \eta_\eps(y) dy  = 0.
\end{align*}

\noi
Hence, \eqref{A4} holds.
Noting $\phi_\eps$ has a compact support, 
we conclude from 
the analyticity of $\ft \phi_\eps$
that the condition (i) also holds for $\phi_\eps$.

For $0< \eps \leq 1$, we have  $\supp \phi_\eps \subset \supp \psi + B_0(1)$, 
where $ B_0(1)$ denotes the ball of radius 1 centered at 0. 
In particular, $\phi_\eps$ converges to $\psi$ in $L^\infty(\R)$.
Then, by the triangle inequality and the mean value theorem, we have 
\begin{align}
\bigg| \int_\R & \psi(x) e^{  i |\psi(x)|^2 t_0}  dx 
-  \int_\R \phi_\eps(x) e^{ i |\phi_\eps(x)|^2 t_0}  dx \bigg| \notag\\
& \leq 
   \int_\R |\psi(x)| |e^{  i |\psi(x)|^2 t_0}
- e^{  i |\phi_\eps(x)|^2 t_0}|
  dx 
  + 
 \int_\R |\psi(x) - \phi_\eps(x) | dx \notag\\
& \leq c(\psi, t_0)  \int_{\supp \psi + B_0(1)} |\psi(x) - \phi_\eps(x) | dx 
\leq \frac {c}{2}
\label{A5}
\end{align}

\noi
for all sufficiently small $\eps > 0$.
Therefore, 
from \eqref{A2} and \eqref{A5}, we conclude that 
the condition (ii) with $c$ replaced by $\frac c2$ also holds for $\phi_\eps$ 
with sufficiently small $\eps > 0$.
\end{proof}

For the first few values of $\kappa \in \N$, 
we  concretely
construct functions $\psi_\kappa$,   satisfying the conditions (i) and (ii) 
in Lemma \ref{LEM:const}.
Then, Lemma \ref{LEM:const}
yields a smoothed version $\phi_\kappa$ of $\psi_\kappa$,  
serving as a good initial 
condition $\phi$  
in the supercritical case of the proof of Theorem \ref{THM:1} and \ref{THM:2} 
when $ - \kappa -\frac 12 < s \leq - \frac 12$.

Define $\psi_1$ by 
\[\psi_1 (x) =  \sqrt{\pi} \ind_{[1, 3]} (x) - 2 \sqrt{\pi} \ind_{[4, 5]}(x). \]

\noi
Note that $\ft \psi_1(0) = 0$.
Thus, by the analyticity of $\ft \psi_1$, 
we have 
 $\ft \psi_1(\xi) = O_{\xi \to 0}(|\xi|)$.
The condition (ii) is clearly satisfied with $t_0 = 1$.

Now, let  $\psi_2(x) = \psi_1(x) + \psi_1(-x)$.
It is easy to see that $\psi_2$
 satisfies \eqref{A3} for $j = 0, 1$.
 Thus, by analyticity of $\ft \psi_2$, it satisfies the condition (i) with $\kappa = 2$.
The condition (ii) is clearly satisfied with $t_0 = 1$.
	
Next, define $\psi_4$ with a parameter $a$ by 
\[ \psi_4 (x; a) =  \sqrt{\pi} \ind_{[1, 2]} (x) - 2 \sqrt{\pi} \ind_{[4, 5]}(x)
+  \sqrt{\pi} \ind_{[a, a+1]} (x), \quad x \geq 0  \]
	
\noi
and $\psi_4(x; a) = \psi_4(-x; a)$ for $x < 0$, where $a > 5$.
By definition, $\psi_4$ satisfies \eqref{A3}
for $j = 0, 1,$ and $ 3$.
Letting $F(a) = \int_\R x^2 \psi_4(x; a) dx$, we see that 
$F(5) < 0 < F(10)$.  Hence, by the intermediate value theorem,
there exists $a_* \in (5, 10)$
such that $F(a_*) = 0$.
Now,  set $\psi_4(x) = \psi_4(x; a_*)$.
Then, it is easy to see that $\psi_4$
satisfies the conditions (i) and (ii)\footnote{We can write
 $\psi_4 = \sqrt{\pi} \ind_{E_1} - 2 \sqrt\pi\ind_{E_2}$, 
 where 
 $ E_1 \cap E_2 = \emptyset$. 
It is clear that functions of this form satisfy \eqref{A2}
at $t_0 = 1$
with $c  = \sqrt {\pi} |E_1|+2 \sqrt {\pi} |E_2| >0$.} with $\kappa = 4$.

In general, one may continue to construct  $\psi_\kappa$ in this fashion
(and construct smooth $\phi_\kappa$ by applying Lemma \ref{LEM:const})
but  combinatorics gets cumbersome.
In the following, 
we instead discuss an alternative (simpler but less direct) construction of 
 $\phi_\kappa \in C^\infty_c(\R)\cap \dot H^s(\R) $, $\kappa \in \N$,
 satisfying the conditions (i) and (ii).

Let $w$ be a smooth solution to \eqref{NLS2}
with a smooth initial condition $w|_{t = 0} = \phi$. 
Then, from \eqref{NLS2}, we have
\begin{align*}
\dt^2 \int_\R w \,  dx = - \int_\R |w|^4 w\, dx.
\end{align*}

\noi
In particular, by taking a   mean-zero real-valued smooth  initial condition $\phi \not \equiv 0$
with a compact support such that $\int \phi^5 \, dx < 0$,
we have
\begin{align*}
\dt^2 \int_\R w \,  dx\bigg|_{t = 0}  = - \int_\R \phi^5 \, dx > 0.
\end{align*}

\noi
Then, by continuity in time of $w$, we have 
\begin{align}
 \dt^2\Re  \int_\R w (t)  dx
=   - \Re \int_\R |w|^4 w(t) dx
> 0
\label{A7}
\end{align}

\noi
on some time interval $[0, t_0]$.
On the other hand, from \eqref{NLS2}, we have 
\begin{align}
\dt \Re \int_\R w \,  dx \bigg|_{t = 0} = \Re\bigg( i  \int_\R \phi^3\, dx\bigg) = 0.
\label{A8}
\end{align}

\noi
Hence, it follows from \eqref{A7} and \eqref{A8}
that 
\begin{align*}
\dt \Re \int_\R w (t)  dx > 0
\end{align*}

\noi
for $t \in (0, t_0]$.
Recalling that $w|_{t = 0} = \phi$ has mean 0, 
we obtain
\begin{align*}
 \Re \int_\R w (t)  dx > 0
\end{align*}

\noi
for $t \in (0, t_0]$.
This implies \eqref{A1}.

Given $\kappa \in \N$, 
let $f$ be a real-valued smooth function on $\R$ with a compact support such that 
$\int (\dx^\kappa f)^5 \, dx < 0$.
Then, 
it is easy to see that $\phi_\kappa  := \dx^\kappa f \in C^\infty_c(\R)$
satisfies the conditions (i) and (ii).

\section{High-to-low energy transfer for the ODE: Part 2}
\label{SEC:APP2}

In this appendix, we 
construct solutions to the ODE \eqref{NLS2}
with more robust high-to-low energy transfer
than those constructed in Appendix \ref{SEC:APP}.

Let $\M = \T$ or $\R$ 
and $\ft \M$ denote
 the Pontryagin dual of $\M$
given by $\ft \M = \Z$ if $\M = \T$ and
$\ft \M = \R$ if $\M = \R$.
Let 
 $s < 0$.
Given $N \gg 1$, define a  function $\phi_N$ on $\M$ 
by
\begin{align}
\ft \phi_N(\xi) = R\big\{ \ind_{N + Q_A} (\xi) + \ind_{2N + Q_A} (\xi)\big\}, 
\quad \xi \in \ft \M, 
\label{Q1}
\end{align}
	
\noi
for suitably chosen
 $A = A(N)\gg 1$ and $R = R(N)\geq 1$, 
where 
$Q_A = \big[-\frac{A}{2}, \frac{A}{2}\big]$.
Note that we have
\begin{align}
\| \phi_N\|_{H^s(\M)} \sim R A^{\frac 12} N^s.
\label{Q2}
\end{align}

Let $w = w(N)$ be the solution to \eqref{NLS2} 
with $w|_{t = 0} = \phi_N$. 
Then, Lemmas \ref{LEM:growth}, 
 \ref{LEM:growth2}, and  \ref{LEM:growth3}
 follow as a corollary to the 
 following proposition.

\begin{proposition}\label{PROP:main}
Given $N \gg1$, 
set $s < 0$,  
$R = R(N)$, $A = A(N)$, and $T_N > 0$ 
by one of the followings: 
\begin{align}
\textup{(i)} \
& s = -\tfrac 12,   \quad R  = 1, 
\quad A = \frac{N}{(\log N)^{\frac 1{16}}}
\quad \text{and}\quad
T_N = \frac{1}{N^2 (\log N)^\frac{1}{8}}, 
\label{Q3}\\
\textup{(ii)} \ 
 & 
 s < 0, \quad
R = \frac{1}{N^s \log N}, 
\quad 
A  = \log N, 
\quad \text{and}
\quad 
T_N = \frac{N^{2s} }{\log N}, 
\label{Q4}\\
\intertext{\text{or \ } }
\textup{(iii)} 
\
& s < -\tfrac 12, \quad 
R  = N^{-\frac 12 - s},  
\quad
A  = N^{1 - \theta}, 
\quad \text{and}\quad
T = N^{2s - 1 - \theta}, 
\label{Q5}
\end{align}

\noi
where $\theta > 0$ is  sufficiently small 
such that 
\begin{align}
s < -\tfrac 12 - 3 \theta.
\label{Q5a}
\end{align}

\noi
Then, we have 
\begin{align}
\| w (T_N)\|_{H^{s}(\M)} 
& \geq \|\P_{<N} w (T_N)\|_{H^{s}(\M)} \notag\\
& \ges 
\begin{cases}
(\log N)^\frac{1}{4}, & \text{if \textup{(i)} holds,}\\
N^{-s} (\log N)^{-2}g(N), & \text{if \textup{(ii)} holds,}\\
N^{-\frac 12 - s - 3 \theta}, 
& \text{if \textup{(iii)} holds,}
\end{cases}
\label{Q6}
\end{align}

\noi
where $g(N)$ is given by 
\begin{align*}
g(N) = 
\begin{cases}
1, & \text{if } s < -\frac 12, \\
(\log\log N)^\frac{1}{2} & \text{if } s= -\frac 12, \\
(\log N)^{ \frac 12 + s} & \text{if } -\frac 12 < s < 0.
\end{cases}
\end{align*}

\end{proposition}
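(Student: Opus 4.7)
My plan is to expand the explicit ODE solution \eqref{NLS2a} as a power series,
\begin{align*}
w(t) = \phi_N \, e^{i |\phi_N|^2 t} = \sum_{k=0}^\infty \frac{(it)^k}{k!} |\phi_N|^{2k} \phi_N =: \sum_{k=0}^\infty \Xi_k(t),
\end{align*}
and to isolate the first nonlinear term $\Xi_1(T_N) = i T_N |\phi_N|^2 \phi_N$ as the \emph{sole} source of the low-frequency mass claimed in \eqref{Q6}. The triangle inequality
\begin{align*}
\| \P_{<N} w(T_N) \|_{H^s} \geq \| \P_{<N} \Xi_1(T_N) \|_{H^s} - \| \P_{<N} \phi_N \|_{H^s} - \Big\| \sum_{k \geq 2} \P_{<N} \Xi_k(T_N) \Big\|_{H^s}
\end{align*}
then reduces the problem to lower-bounding the first piece and showing the other two are of strictly smaller order.

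For the main piece, I would compute
\begin{align*}
\ft{|\phi_N|^2 \phi_N}(n) = \sum_{n_1 - n_2 + n_3 = n} \ft \phi_N(n_1) \, \cj{\ft \phi_N(n_2)} \, \ft \phi_N(n_3)
\end{align*}
(with the analogous integral formula on $\R$) and observe that for $|n| < N$ with $A \ll N$, the only configuration among $(n_1, n_2, n_3) \in (\{N, 2N\} + Q_A)^3$ with $n_1 - n_2 + n_3$ small is the resonance $N - 2N + N = 0$, forcing $(n_1, n_3) \in (N + Q_A)^2$ and $n_2 \in 2N + Q_A$. After setting $n_1 = N + m_1$, $n_2 = 2N + m_2$, $n_3 = N + m_3$ with $m_j \in Q_A$, the number of triples satisfying $m_1 - m_2 + m_3 = n$ is $\ges A^2$ uniformly for $|n| \leq A/4$, giving $|\ft{|\phi_N|^2 \phi_N}(n)| \ges R^3 A^2$ on this window. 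Hence
\begin{align*}
\| \P_{<N} \Xi_1(T_N) \|_{H^s(\M)} \ges T_N R^3 A^2 \cdot W_s(A),
\end{align*}
where $W_s(A)^2 \sim 1$, $\log A$, and $A^{1+2s}$ in the regimes $s < -\frac 12$, $s = -\frac 12$, and $-\frac 12 < s < 0$, respectively. Substituting the choices of $R$, $A$, $T_N$ from \eqref{Q3}--\eqref{Q5} reproduces the three lower bounds in \eqref{Q6}.

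For the error terms, the linear piece arises only from the Fourier shell $[N - A/2, N)$ and satisfies $\| \P_{<N} \phi_N \|_{H^s} \les R \sqrt{A}\, N^s$, which in each of (i)--(iii) is strictly smaller than $T_N R^3 A^2 W_s(A)$. For $k \geq 2$, only the cluster of $\ft{|\phi_N|^{2k} \phi_N}$ centered at $0$ (of width $\les (2k+1) A$) meets $\{|n| < N\}$, and Young's inequality on the Fourier side gives $\| \ft{|\phi_N|^{2k} \phi_N} \|_{\ell^\infty} \les (AR)^{2k} R$, yielding
\begin{align*}
\| \P_{<N} \Xi_k(T_N) \|_{H^s} \les \frac{T_N^k}{k!} \, (AR)^{2k}\, R \cdot W_s((2k+1) A).
\end{align*}
Summing on $k$, the tail is controlled by a single power of $T_N A^2 R^2$, which equals $(\log N)^{-1/4}$, $(\log N)^{-1}$, and $N^{-3\theta}$ in cases \eqref{Q3}--\eqref{Q5}, respectively. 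The principal obstacle is the combinatorial Fourier analysis in the second paragraph: one must verify carefully that only the resonance $N - 2N + N = 0$ contributes at low frequency and that the resulting count of triples is of order $A^2$ on a window of size $\sim A$, so that the three asymptotics of $W_s(A)$ translate cleanly into the three regimes of \eqref{Q6}; once this is in place, the remaining estimates are routine parameter substitutions.
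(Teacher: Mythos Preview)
Your proposal is correct and follows essentially the same approach as the paper: expand $w(t)=\phi_N e^{i|\phi_N|^2 t}$ as a power series in $t$, isolate $\Xi_1$ via the triangle inequality, lower-bound $\|\P_{<N}\Xi_1(T_N)\|_{H^s}$ by the convolution/resonance count giving $T_N R^3 A^2 \cdot W_s(A)$, and control $\Xi_0$ and $\sum_{k\ge 2}\Xi_k$ by H\"older plus Young on the Fourier side together with the smallness of $T_N R^2 A^2$. The only cosmetic difference is that the paper packages the upper and lower bounds as two separate lemmas (your $W_s$ is their $f$) and bounds the full $\|\Xi_k\|_{H^s}$ rather than the projected version, while you spell out the specific resonance $N-2N+N=0$; your claim that ``only the cluster centered at $0$ meets $\{|n|<N\}$'' can fail for very large $k$ in cases (i) and (iii), but this is harmless since the H\"older--Young bound you state holds regardless of which clusters are present.
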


Note that, when (ii) holds, 
we only use Proposition \ref{PROP:main} for $-\frac 12 < s< 0$.
We nonetheless include the proof for all $s < 0$ in the following.

\begin{remark}
\rm 
From \eqref{Q2}, we have
\begin{align}
\|\phi_N\|_{H^{s}(\M)} 
\sim 
\begin{cases}
(\log N)^{-\frac{1}{32}}, & \text{if \textup{(i)} holds,}\\
(\log N)^{-\frac{1}{2}}, & \text{if \textup{(ii)} holds,}\\
N^{-\frac \theta 2}, 
& \text{if \textup{(iii)} holds,}
\end{cases}
\label{Q7}
\end{align}

\noi
all tending to 0 as $N \to \infty$.

\end{remark}

\noi

From the explicit formula \eqref{NLS2a}
and the power series expansion, 
we have 
\begin{align}
 w  (t) = 
  \phi_N e^{i |\phi_N|^2 t}
  = 
 \sum_{k = 0}^\infty \Xi_k(t) , 
\label{B3}
\end{align}

\noi
where $\Xi_k$ is defined by 
\begin{align}
\Xi_k(t) : = 
 \frac{(i t)^k}{k!}
  |\phi_N|^{2k}  \phi_N.
\label{B4}
\end{align}

\noi
We prove Proposition \ref{PROP:main}
by estimating each $\Xi_k$ either from above or below.
We first state elementary lemmas.

\begin{lemma}\label{LEM:Hs2}

Let $s < 0$.
Then, there exists $C >0$ such that 
\begin{align}
\| \Xi_k (t)\|_{H^s(\M)}
\leq \frac{C^k t^{k}}{k!} 
(RA)^{2k} R\cdot f(A)
\label{Hs21}
\end{align}
	
\noi
for any $k \in \N$, 
where $f(A)$ is given by 
\begin{align}
f(A) = 
\begin{cases}
1, & \text{if } s < -\frac 1 2, \\
(\log A)^\frac 12, & \text{if } s = -\frac 1 2, \\
A^{\frac{1}{2}+s} & \text{if } s > -\frac 1 2.
\end{cases}
\label{Hs21a}
\end{align}

\end{lemma}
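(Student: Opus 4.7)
My plan is to exploit the explicit factorization of $\phi_N$ afforded by the indicator-function form of $\widehat\phi_N$ in \eqref{Q1}. Writing $D_A(x) := \sum_{|n|<A/2} e^{2\pi inx}$ for the Dirichlet-type kernel with Fourier support in $Q_A$, one has
$\phi_N(x) = R\,D_A(x)\,e^{2\pi iNx}\bigl(1+e^{2\pi iNx}\bigr)$. Using $|1+e^{2\pi iNx}|^2 = 2+e^{2\pi iNx}+e^{-2\pi iNx}$ together with the identity $(2+y+y^{-1})^k = y^{-k}(1+y)^{2k}$ for $y=e^{2\pi iNx}$ yields the clean expansion
$|\phi_N|^{2k}\phi_N(x) = R^{2k+1}\,D_A(x)^{2k+1}\sum_{m=1-k}^{k+2} c_m\,e^{2\pi imNx}$
with $c_m = \binom{2k+1}{m+k-1}$. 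In particular, $c_0^2 \lesssim 16^k/k$ and $\sum_m c_m^2 = \binom{4k+2}{2k+1}\lesssim 16^k/\sqrt k$, giving a very transparent picture of how the mass of $|\phi_N|^{2k}\phi_N$ is distributed across Fourier chunks centered at integer multiples of $N$.

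Provided $(2k+1)A<N$, the shifted supports $mN + (2k+1)Q_A$ are pairwise disjoint, and Plancherel yields
$\|\Xi_k\|_{H^s(\T)}^2 = \tfrac{t^{2k}}{(k!)^2}R^{4k+2}\sum_m c_m^2 \sum_{r\in (2k+1)Q_A}\langle mN+r\rangle^{2s}|\widehat{D_A^{2k+1}}(r)|^2$.
I would then split into the $m=0$ and $m\neq 0$ contributions. For $m=0$ the inner sum equals $\|D_A^{2k+1}\|_{H^s}^2$; I would bound this by pairing the sharp $\ell^\infty$ estimate
$\|\widehat{D_A^{2k+1}}\|_{\ell^\infty}\leq \|D_A^{2k+1}\|_{L^1} = \|D_A\|_{L^{2k+1}}^{2k+1} \lesssim A^{2k}$
(consequence of the interpolation $\|D_A\|_{L^p}^p \le A^{p-1}$ for $p\geq 2$) with the elementary weighted counting $\sum_{|r|\leq (2k+1)A}\langle r\rangle^{2s}\lesssim C^k f(A)^2$, which handles the three cases in the definition of $f$ in one stroke. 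This gives $\|D_A^{2k+1}\|_{H^s}^2\lesssim C^k A^{4k} f(A)^2$. For $m\neq 0$, the assumption $(2k+1)A<N$ forces $\langle mN+r\rangle\sim |m|N$ on the support; by Plancherel and $\|D_A^{2k+1}\|_{L^2}^2\leq A^{4k+1}$ the inner sum is $\lesssim (|m|N)^{2s} A^{4k+1}$, and summing $c_m^2|m|^{2s}$ against $\sum_m c_m^2$ produces a total $\lesssim 16^k A^{4k+1} N^{2s}/\sqrt k$. This last term is dominated by the $m=0$ bound because the comparison $A\leq N$ (implicit in the definition of $\phi_N$) together with the case analysis for $f$ delivers $A N^{2s}\lesssim f(A)^2$ in each regime.

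Assembling the pieces produces $\|\Xi_k\|_{H^s}^2 \leq \tfrac{C^{2k} t^{2k}}{(k!)^2}(RA)^{4k} R^2 f(A)^2$, which is the square of the claim. The main obstacle I anticipate is the complementary regime $(2k+1)A\geq N$ in which the Fourier slices overlap and the above Plancherel decomposition collapses. For such $k$ I would fall back on the trivial estimate
$\|\Xi_k\|_{H^s}\leq \|\Xi_k\|_{L^2}\leq \tfrac{t^k}{k!}\|\phi_N\|_{L^\infty}^{2k}\|\phi_N\|_{L^2}\lesssim \tfrac{t^k}{k!}(RA)^{2k}R A^{1/2}$,
and verify that the constraint $k\gtrsim N/A$ (a power of $N$ in all parameter regimes of Proposition~\ref{PROP:main}) is enough to make the excess factor $A^{1/2}/f(A)$ absorbable into $C^k$ with $C$ chosen sufficiently large. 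The argument on $\M=\R$ is entirely parallel, with the discrete kernel $D_A$ replaced by its continuous analogue $A\,\mathrm{sinc}(Ax)$ and the $L^p$ estimates $\|A\,\mathrm{sinc}(A\cdot)\|_{L^p(\R)}^p \lesssim A^{p-1}$ playing the same role.
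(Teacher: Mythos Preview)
Your argument in the non-overlapping regime $(2k+1)A < N$ is correct (and quite detailed), but the fallback for the overlapping regime $(2k+1)A \geq N$ has a genuine gap. You claim the excess factor $A^{1/2}/f(A)$ can be absorbed into $C^k$ once $k \gtrsim N/A$, but this fails already in regime (i) of Proposition~\ref{PROP:main}: there $s = -\tfrac12$, $A = N/(\log N)^{1/16}$, so $N/A = (\log N)^{1/16}$ while $A^{1/2}/f(A) \sim N^{1/2}(\log N)^{-17/32}$. At the threshold $k \sim (\log N)^{1/16}$ one would need $C^{(\log N)^{1/16}} \gtrsim N^{1/2}$, i.e.\ $\log C \gtrsim (\log N)^{15/16}$, so no fixed $C$ works. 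Since the lemma must provide a constant $C$ independent of $N$ (this is precisely what is used when summing $\sum_{k \geq 2} \Xi_k$ in the proof of Proposition~\ref{PROP:main}), the proposed route does not close.

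The paper avoids the overlap dichotomy altogether with a short argument that is uniform in $k$. By H\"older's inequality on the Fourier side,
\[
\|\Xi_k(t)\|_{H^s} \leq \|\jb{\xi}^s\|_{L^2_\xi(\supp \F[\Xi_k(t)])} \cdot \|\Xi_k(t)\|_{\F L^\infty}.
\]
For the first factor one observes that $\supp \F[\Xi_k(t)]$, being a $(2k+1)$-fold Minkowski sum/difference of two intervals of length $A$, has total measure at most $C^k A$; since $\jb{\xi}^s$ is symmetric decreasing for $s<0$, rearranging that support to an interval about the origin gives $\|\jb{\xi}^s\|_{L^2(\supp)} \leq \|\jb{\xi}^s\|_{L^2([-\frac12 C^kA,\,\frac12 C^kA])} \les C^k f(A)$. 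For the second factor, Young's inequality yields
\[
\|\Xi_k(t)\|_{\F L^\infty} \leq \frac{t^k}{k!}\|\phi_N\|_{\F L^1}^{2k}\|\phi_N\|_{\F L^\infty} \leq \frac{t^k}{k!}(RA)^{2k}R.
\]
No case split on $k$ versus $N/A$ is needed, and none of your Dirichlet-kernel machinery enters; in particular, the $\F L^\infty$ bound you derived via $\|D_A\|_{L^{2k+1}}^{2k+1}$ is already subsumed by this single application of Young's inequality.
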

	
\begin{proof}

From \eqref{Q1}, 
we see that $\supp \ft \phi_N$
consists of two disjoint intervals of length $ A$.
Since 
$\Xi_k(t)$ is basically 
a $(2k+1)$-fold product of $\phi_N$
and its complex conjugate, 
it follows that the
spatial support of $\F[ \Xi_k(t) ]$
consists of (at most) $2^{2k+1}$
intervals of length $ A$.
Thus, we have 
\begin{align*}
\big|\supp \F[\Xi_k(t)]\big| \leq C^k A.
\end{align*}

\noi
for some $C>0$.
By the monotonicity of $\jb{\xi}^s$ for $s < 0$, 
we have
\begin{align}
\| \jb{\xi}^s\|_{L^2_\xi(\supp \F[\Xi_k (t)])}
& \leq \| \jb{\xi}^s\|_{L^2_\xi\big([-\frac 12 C^k A,\frac 12 C^k A] \big)}
 \les 
C^k f(A).
\label{Hs22}
\end{align}

\noi
Then, by H\"older's inequality, 
\eqref{Hs22}, and Young's inequality with \eqref{Q1}, 
we have 
\begin{align*}
\| \Xi_k(t) \|_{H^s(\M)}
& \leq \| \jb{\xi}^s\|_{L^2_\xi(\supp \F[\Xi_k (t)])}
\| \Xi_k(t) \|_{\F L^\infty}  \notag\\
& \leq 
f(A)\cdot
\frac{C^k t^{k} }{k!} \| \phi_N\|_{\F L^1}^{2k}
\| \phi_N\|_{\F L^\infty} \notag\\
& \leq \frac{C^k t^{k} }{k!}
(RA)^{2k} R\cdot  f(A).
\end{align*}

\noi
This proves \eqref{Hs21}.
\end{proof}

In the next lemma, 
we  show that Lemma \ref{LEM:Hs2}
is indeed sharp when $k = 1$, 
by 
 exploiting
a high-to-low energy transfer mechanism in $\Xi_1$.

\begin{lemma}\label{LEM:Hs3}
 Let $s < 0$ and $A \ll N$.
Then,  we have
\begin{align}
\|  \Xi_1 (t)\|_{H^s}
\geq \| \P_{<N} \Xi_1 (t)\|_{H^s}
\ges t  R^3 A^{2} \cdot f(A),
\label{Hs31}
\end{align}

\noi
where $f(A)$ is as  in \eqref{Hs21a}.
\end{lemma}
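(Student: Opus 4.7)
The plan is to compute $\widehat{\Xi_1}(\xi, t) = it \cdot (\widehat{\phi_N} * \widehat{\overline{\phi_N}} * \widehat{\phi_N})(\xi)$ explicitly, isolate a fat low-frequency lump coming from the unique resonance, and extract the $H^s$ lower bound by restricting the Fourier integral (or sum) to that lump. Decomposing $\phi_N = \phi_N^{(N)} + \phi_N^{(2N)}$ according to the two bumps in \eqref{Q1}, the triple product $|\phi_N|^2 \phi_N$ splits into eight pieces whose Fourier supports are disjoint bumps of width $O(A)$ centered at $a - b + c$ with $a, b, c \in \{N, 2N\}$. The only combination giving center $0$ is $(a,b,c) = (N, 2N, N)$, coming from $\phi_N^{(N)} \overline{\phi_N^{(2N)}} \phi_N^{(N)}$; since $A \ll N$, no other branch overlaps the low-frequency region, so there is no cancellation to worry about.

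Parametrizing the resonant variables as $\xi_1 = N + y_1$, $\xi_2 = -2N + y_2$, $\xi_3 = N + y_3$ with $y_i \in Q_A$, the contributing convolution becomes
\[
\widehat{|\phi_N|^2 \phi_N}(\xi) \;=\; R^3 \iint_{y_1, y_2 \in Q_A,\ \xi - y_1 - y_2 \in Q_A} dy_1\, dy_2 \;=:\; R^3 K(\xi).
\]
A brief Fubini computation (e.g.\ changing variables to $u = y_1 + y_2$, $v = y_1 - y_2$) gives $K(0) = 3A^2/4$, and more generally $K(\xi) \ges A^2$ uniformly for $|\xi| \leq A/4$, since the hexagonal region of integration deforms only slightly under small shifts of $\xi$. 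On $\T$, integrals are replaced by sums over $y_i \in \Z \cap Q_A$, and since $A \gg 1$, the Riemann-sum approximation yields the identical lower bound $K(n) \ges A^2$ for $|n| \leq A/4$. Consequently,
\[
|\widehat{\Xi_1}(\xi, t)| \;\ges\; t R^3 A^2 \qquad \text{uniformly for } |\xi| \leq A/4.
\]

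Because $A \ll N$, this entire low-frequency blob lies in $\{|\xi| < N\}$, and therefore
\[
\| \P_{<N} \Xi_1(t)\|_{H^s(\M)}^2 \;\ges\; t^2 R^6 A^4 \int_{|\xi| \leq A/4} \jb{\xi}^{2s}\, d\xi \;\sim\; t^2 R^6 A^4 \cdot f(A)^2,
\]
using a routine dyadic estimate that evaluates the remaining integral (resp.\ sum) as $\sim 1$ if $s < -\tfrac12$, $\sim \log A$ if $s = -\tfrac12$, and $\sim A^{2s+1}$ if $s > -\tfrac12$ — precisely $f(A)^2$ as defined in \eqref{Hs21a}. Taking square roots yields the desired bound. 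The only mildly delicate point is propagating the pointwise lower bound on $K(\xi)$ from $\xi = 0$ to a $\xi$-interval of length comparable to $A$, but this is handled by the explicit geometry of the hexagonal region of integration; once that uniform bound is in hand, the remainder is bookkeeping.
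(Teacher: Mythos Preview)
Your proof is correct and follows essentially the same approach as the paper: isolate the resonant triple $(N,2N,N)$ in the threefold convolution to produce a low-frequency bump of height $\gtrsim tR^3A^2$ on an interval of width $\sim A$, then integrate $\jb{\xi}^{2s}$ over that interval to pick up the factor $f(A)$. The paper compresses your explicit convolution computation into a two-line application of the elementary inequality $\ind_{a+Q_A}*\ind_{b+Q_A}\gtrsim A\cdot\ind_{a+b+Q_A}$, iterated twice, and notes that since $\ft\phi_N\geq 0$ all terms in the triple convolution are nonnegative so no disjointness-of-supports argument is needed; otherwise the two arguments are identical.
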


\begin{proof}

First,  recall the following simple lemma on the 
convolution of characteristic functions of intervals:
\begin{align}
\ind_{a + Q_A}* \ind_{b + Q_A} (\xi)\ges A \cdot \ind_{a+b+Q_A}(\xi)
\label{B5}
\end{align}
		
\noi
for all $a, b, \xi\in \ft \M$ and $A \geq 1 $.
Then, 
from \eqref{B4}, \eqref{Q1}, and \eqref{B5}, we have 
\begin{align*}
\big| \F\big[\Xi_1(t)\big](\xi)\big|
& =  t 
 \big| \ft \phi_N*
\ft{\cj{ \phi_N}}* \ft\phi_N(\xi)\big|
\ges t R^3 A^{2} \cdot \ind_{Q_A}(\xi).
\end{align*}

\noi
Then, \eqref{Hs31} follows
once we note 
that 
$\| \jb{\xi}^s\|_{L^2_\xi( Q_A)}
\sim f(A)$
and $A\ll N$.
\end{proof}

We now present the proof of Proposition \ref{PROP:main}.

\begin{proof}[Proof of Proposition \ref{PROP:main}]

From \eqref{Q3}, \eqref{Q4}, and \eqref{Q5}, we have
\begin{align*}
T_N R^2 A^2 = 
\begin{cases}
(\log N)^{-\frac{1}{4}} \ll 1, & \text{if \textup{(i)} holds,}\\
(\log N)^{-1} \ll 1, 
& \text{if \textup{(ii)} holds,}\\
N^{- 3 \theta} \ll1 , 
& \text{if \textup{(iii)} holds.}
\end{cases}
\end{align*}

\noi
Then, 
from  \eqref{B3}
with \eqref{Q7} and Lemma \ref{LEM:Hs2}, we have
\begin{align}
\|  w(T_N) - \Xi_1(T_N) \|_{H^s}
& \leq 
\|  \Xi_0(T_N) \|_{H^s}
+  \bigg\|  \sum_{k = 2}^\infty \Xi_k (T_N)\bigg\|_{H^s}\notag\\
& \les
\begin{cases}
1, & \text{if \textup{(i)} holds,}\\
N^{-s} (\log N)^{-3}g(N),  
& \text{if \textup{(ii)} holds,}\\
N^{-\frac{\theta}{2} }+ N^{-\frac 12 - s - 6 \theta}, 
& \text{if \textup{(iii)} holds.}
\end{cases}
\label{B6}
 \end{align}

\noi
On the other hand, from Lemma \ref{LEM:Hs3}, we have
\begin{align}
\|   \Xi_1 (T_N)\|_{H^s} 
& \geq \|  \P_{<N}\Xi_1 (T_N)\|_{H^s}\notag\\
& \ges 
\begin{cases}
(\log N)^\frac{1}{4}, & \text{if \textup{(i)} holds,}\\
N^{-s} (\log N)^{-2}g(N), & \text{if \textup{(ii)} holds,}\\
N^{-\frac 12 - s - 3 \theta}, 
& \text{if \textup{(iii)} holds.}
\end{cases}
\label{B7}
\end{align}

\noi
Therefore, 
the desired estimate \eqref{Q6} follows from \eqref{B6} and \eqref{B7}
with \eqref{Q5a}.
\end{proof}

\begin{remark}\label{REM:fail2} \rm
Let us briefly discuss the situation 
for Theorem \ref{THM:2} in the critical case
when 
$s = s_\text{crit}^\al = \frac 12 - \al \in (-\frac 12 , 0)$.
In order to prove an analogue
of Proposition \ref{PROP:main} for $-\frac 12 < s < 0$, 
the following must hold:
\begin{align*}
\text{(a)} \hphantom{X} & RA^{\frac 12} N^s =: D \ll 1, \notag\\
\text{(b)} \hphantom{X} & TR^2A^2 \ll 1, \notag\\
\text{(c)} \hphantom{X} &  TR^3A^{\frac{5}{2}+s} \gg 1, 
\end{align*}
	
\noi
with $A = A(N) \ll N$ and $D = D(N) \to 0$ as $N \to \infty$.
Moreover, in carrying out the argument in Subsection \ref{SUBSEC:crit2}, we
also need to have
\begin{align*}
\text{(d)} \hphantom{X} &  T \les N^{-2\al}. 
\end{align*}

%

Let $A = EN$ 
and $T = F N^{2s-1}$
for some $E \ll 1$ and $F \les 1$.
Then, from (a),  (b), and (d), we have 
\begin{align*}
TR^2A^2 = EF\cdot R^2 AN^{2s} = D^2EF.
\end{align*}
	
\noi
Then, from (c), we obtain
\begin{align*}
1 \ll TR^3 A^{\frac 52 + s}
= E^s\cdot TR^2A^2\cdot R A^\frac {1}{2} N^s
= D^3 E^{1+s} F.
\end{align*}

\noi
Hence, we must have 
\[D^{-3} E^{-1-s} \ll F \les 1.\]

\noi
This is clearly a contradiction since $D \to 0$, $ E \ll1$ and $-\frac 12 < s < 0$.
Therefore, even if norm inflation at the critical regularity
holds true in this case, one needs to develop a new method to prove it.

\end{remark}

\begin{ackno}\rm
T.O. was supported by the European Research Council (grant no.~637995 ``ProbDynDispEq'').
T.O. would like to thank Nobu Kishimoto, Thomas Kappeler,  Pieter Blue, and Oana Pocovnicu
for interesting discussions.
The authors are grateful to the anonymous referee for the comments.
\end{ackno}

\end{document}